\documentclass[11 pt]{amsart}
\usepackage{amsmath}
\usepackage{amsfonts}
\usepackage{enumerate}
\usepackage{verbatim}
\usepackage[pdftex]{graphicx}
\DeclareGraphicsExtensions{.png,}
\newtheorem{lemma}{Lemma}
\newtheorem{theorem}{Theorem}
\newtheorem{example}{Example}
\newtheorem{prop}{Proposition}
\newtheorem{corollary}{Corollary}

\newtheorem{defn}{Definition}
\numberwithin{equation}{section}
\numberwithin{defn}{section}
\numberwithin{example}{section}
\numberwithin{prop}{section}
\numberwithin{lemma}{section}

\newcommand{\R}{\mathbb{R}}

\newcommand{\N}{\mathbb{N}}

\newcommand{\V}{\mathcal{V}}

\newcommand{\Lo}{\mathcal{L}}
\newcommand{\T}{\mathcal{T}}
\newcommand{\Ss}{\mathcal{S}}
\newcommand{\Sh}{\mathcal{S}}
\newcommand{\Exp}{\mathbb{E}}
\newcommand{\A}{\mathcal{A}}
\newcommand{\C}{\mathcal{C}}
\newcommand{\W}{\mathcal{W}}
\newcommand{\Hau}{\mathcal{H}}
\newcommand{\RR}{\mathcal{R}}
\newcommand{\B}{\mathcal{B}}
\newcommand{\F}{\mathcal{F}}

\newcommand{\var}{\text{var}}

\newcommand{\D}{\mathcal{D}}
\newcommand{\Error}{\eta(q,r)}

\newcommand{\diam}{\mbox{diam}}

\renewcommand{\dim}{\mbox{dim}_{\mathcal{H}}}

\title[Shrinking Targets for Countable Markov Maps]{Shrinking Targets for Countable Markov Maps}
\author{Henry WJ Reeve}
\address{Henry WJ Reeve\\Department of Mathematics\\ The University of Bristol\\
University Walk\\Clifton\\ Bristol\\BS8 1TW\\UK}
\email{henrywjreeve@googlemail.com}
\begin{document}

\begin{abstract}
Let $T$ be an expanding Markov map with a countable number of inverse branches and a repeller $\Lambda$ contained within the unit interval. Given $\alpha \in \R_+$ we consider the set of points $x \in \Lambda$ for which $T^n(x)$ hits a shrinking ball of radius $e^{-n\alpha}$ around $y$ for infinitely many iterates $n$. Let $s(\alpha)$ denote the infimal value of $s$ for which the pressure of the potential $-s\log|T'|$ is below $s \alpha$. Building on previous work of Hill, Velani and Urba\'{n}ski we show that for all points $y$ contained within the limit set of the associated iterated function system the Hausdorff dimension of the shrinking target set is given by $s(\alpha)$. Moreover, when $\overline{\Lambda}=[0,1]$ the same holds true for all $y \in [0,1]$. However, given $\beta \in (0,1)$ we provide an example of an expanding Markov map $T$ with a repeller $\Lambda$ of Hausdorff dimension $\beta$ with a point $y\in \overline{\Lambda}$ such that for all $\alpha \in \R_+$ the dimension of the shrinking target set is zero.
\end{abstract}

\maketitle

\section{Introduction}

Suppose we have a dynamical system $(X,T,\mu)$ consisting of a space $X$ together with a map $T:X \rightarrow X$ and a $T$-invariant ergodic probability measure $\mu$. Let $A$ be a subset of positive $\mu$ measure. Poincar\'{e}'s recurrence theorem implies that $\mu$ almost every $x\in X$ will visit $A$ an infinite number of times, ie. $\bigcap_{m\in\N} \bigcup_{n\geq m}T^{-n}A$ has full $\mu$ measure. This raises the question of what happens when we allow $A$ to shrink with respect to time. How does the size of $\bigcap_{m\in\N} \bigcup_{n\geq m}T^{-n}A(n)$ depend upon the sequence $\left\lbrace A(n)\right\rbrace_{n \in \N}$? 

We shall consider this question in the setting of hyperbolic maps. Given a Gibbs measure $\mu$, Chernov and Kleinbock have given general conditions according to which $\bigcap_{m\in\N} \bigcup_{n\geq m}T^{-n}A(n)$ will have full $\mu$ measure \cite{CK}. However, when $\sum_{n=0}^{\infty} \mu(A(n))$ is finite it is clear that $\bigcap_{m\in\N} \bigcup_{n\geq m}T^{-n}A(n)$ must be of zero $\mu$ measure. In particular, if $\left\lbrace A(n)\right\rbrace_{n \in \N}$ is a sequence of balls which shrink exponentially fast around a point, then $\bigcap_{m\in\N} \bigcup_{n\geq m}T^{-n}A(n)$ must be of zero Lebesgue measure. Thus, in order to understand its geometric complexity we must determine its Hausdorff dimension (see \cite{F1} for an introduction to dimension theory).
 
In \cite{HV1, HV2} Hill and Velani consider the dimension of the shrinking target set 
\begin{equation*}
\D_y(\alpha):=\bigcap_{m\in\N} \bigcup_{n\geq m}\left\lbrace x\in X: |T^n(x)-y|< e^{-n \alpha}\right\rbrace. 
\end{equation*}
Let $s(\alpha)$ denote the infimal value of $s$ for which the pressure of the potential $-s\log|T'|$ is below $s \alpha$. In \cite{HV2} it is shown that for an expanding rational maps of the Riemann sphere the dimension of $\D_y(\alpha)$ is given by $s(\alpha)$ for all points $y$ contained within the Julia set. Now suppose we have a piecewise continuous map of the unit interval $T$ with repeller $\Lambda$. When $T$ has just finitely many inverse branches, Hill and Velani's formula for the dimension of $\D_y(\alpha)$ extends unproblematically. That is, for all $y\in \overline{\Lambda}$, $\dim \D_y(\alpha)=s(\alpha)$. However when $T$ has an infinite number of inverse branches things become more difficult, owing to the unboundedness $|T'|$. In \cite{U} Urba\'{n}ski showed that for those $y\in \Lambda$ satisfying $\sup\{|(T')(T^n(y))|\}_{n\geq 0}<\infty$, the dimension of $\D_y(\alpha)$ is equal to $s(\alpha)$. We prove that, even for systems with an infinite number of inverse branches, this formula extends to all points $y \in \Lambda$. Moreover, when $\overline{\Lambda}=[0,1]$ we have $\dim \D_y(\alpha)=s(\alpha)$ for all $y \in [0,1]$. However, we provide a family of examples showing that when $\dim \Lambda \in (0,1)$, whilst $s(\alpha)$ is always positive, the dimension of $\D_y(\alpha)$ can be zero for certain members of $y\in \overline{\Lambda}\backslash \Lambda$.

\section{Statement of results}

Before stating our main results we shall introduce some notation and provide some further background.

\begin{defn}[Expanding Markov Map]\label{EMR def} Let $\V=\{V_i\}_{i\in\A}$ be a countable family of disjoint subintervals of the unit interval with non-empty interior. Given $\omega =(\omega_0,\cdots,\omega_{n-1}) \in \A^n$ for some $n\in \N$ we let $V_{\omega}:=\cap_{\nu=0}^{n-1}T^{-\nu}V_{\omega_{\nu}}$. We shall say that $T: \cup_{i\in\A}V_i \rightarrow [0,1]$ is an expanding Markov map if $T$ satisfies the following conditions.
\begin{itemize}
\vspace{2mm}
\item[(1)] For each $i \in \A$, $T|_{V_i}$ is a $C^1$ map which maps the interior of $V_i$ onto open unit interval $(0,1)$,
\vspace{2mm}
\item[(2)] There exists $\xi>1$ and $N \in \N$ such that for all $n\geq N$ and all $x \in \cup_{\omega \in \A^n}V_{\omega}$ we have $|(T^n)'(x)|>\xi^n$,
\vspace{2mm}
\item[(3)] There exists some sequence $\{\rho_n\}_{n\in \N}\subset \R$ with
$lim_{n \rightarrow \infty }\rho_n = 0$ such that for all $n \in \N$, $\omega \in\A^n$, and all $x, y \in V_{\omega}$,
\[ e^{-n\rho_n} \leq \frac{|(T^n)'(x)|}{|(T^n)'(y)|}\leq e^{n\rho_n}.\]
\end{itemize}
\vspace{.5mm}
We shall say that $T$ is a finite branch expanding Markov map if $\A$ is a finite set. 
\end{defn}
The repeller $\Lambda$ of an expanding Markov map is the set of points for which every iterate of $T$ is well-defined, $\Lambda:=\bigcap_{n \in \N}T^{-n}([0,1])$.
We assume throughout that $\#\A>1$. Otherwise $\Lambda$ would either empty or contained within a single point.

Given a point $y\in \overline{\Lambda}$ in the closure of the repeller and some $\alpha\in \R_+$ we shall be interested in the set of points $x \in \Lambda$ for which $T^n(x)$ hits a shrinking ball of radius $e^{-n\alpha}$ around $y$ for infinitely many iterates $n$,
\begin{eqnarray}
\D_y(\alpha):=\bigcap_{m\in\N} \bigcup_{n\geq m}\left\lbrace x\in \Lambda: |T^n(x)-y|<e^{-n\alpha} \right\rbrace.
\end{eqnarray}
More generally, given a function $\varphi: \Lambda \rightarrow \R_+$ we let $S_n(\varphi):= \sum_{i=0}^{n-1}\varphi \circ T^l$ and define
\begin{eqnarray}
\D_y(\varphi):=\bigcap_{m\in\N} \bigcup_{n\geq m}\left\lbrace x\in \Lambda: |T^n(x)-y|<e^{-S_n(\varphi)(x)} \right\rbrace.
\end{eqnarray}

Sets of the form $\D_y(\varphi)$ arise naturally in Diophantine approximation.

\begin{example}\label{Gauss example}
Given $\alpha \in R_+$ we let
\begin{equation*}
J(\alpha):=\left\lbrace x\in [0,1]:\bigg|x-\frac{p}{q}\bigg|<\frac{1}{q^{\alpha}}\text{ for infinitely many }p,q \in \N\right\rbrace.
\end{equation*}
Let $T:[0,1]\rightarrow [0,1]$ be the Gauss map $x \mapsto \frac{1}{x}-\lfloor \frac{1}{x}\rfloor$ which is an expanding Markov map on the repeller $\Lambda=[0,1]\backslash \mathbb{Q}$. We define $\psi:\Lambda \rightarrow \R$ by $\psi(x)= \log |T'(x)|$ and for each $\alpha >2$ we let $\psi_{\alpha}:=\left(\frac{\alpha}{2}-1\right)\psi$. Then for all $2<\alpha<\beta<\gamma$ we have,
\begin{equation}\label{diophantine containment}
\D_0\left(\psi_{\alpha}\right)\subset J(\beta) \subset \D_0\left(\psi_{\gamma}\right).
\end{equation}
In \cite{J, B} Jar\'{n}ik and Besicovitch showed that for $\alpha>2$, $\dim(J(\alpha))=\frac{2}{\alpha}$. By 
(\ref{diophantine containment}) this is equialent to the fact that for all $\alpha>2$ 
\begin{equation*}
\dim D_0\left(\psi_{\alpha}\right)=\frac{2}{\alpha}.
\end{equation*}
\end{example}

As we shall see, in sufficiently well behaved settings, the Hausdorff dimension of $\D_y(\varphi)$ may be expressed in terms of the thermodynamic pressure. 

\begin{defn}[Tempered Distortion Property] 
Given a real-valued potential $\varphi:\Lambda \rightarrow \R$ we define the $n$-th level variation of $\varphi$ by,
\begin{eqnarray*}
\var_n(\varphi):=\sup\left\lbrace |\varphi(x)-\varphi(y)|: x,y \in V_{\omega}, \omega \in \A^n \right\rbrace.
 \end{eqnarray*}
We shall say that a potential $\varphi$ satisfies the tempered distortion condition if $\var_1(\varphi)<\infty$ and $\lim_{n \rightarrow \infty}n^{-1}\var_n(S_n(\varphi))=0.$ 
\end{defn}

Note that by condition (3) in definition \ref{EMR def} the potential $\psi(x):=\log |T'(x)|$ satisfies the tempered distortion condition.

Given a potential $\varphi: \Lambda \rightarrow \R$ and a word $\omega \in \A^n$ for some $n \in \N$ we define $
\varphi(\omega):=\sup \left\lbrace \varphi(x): x \in V_{\omega} \right\rbrace.$

\begin{defn}\label{pressure def}Given a potential $\varphi:\Lambda \rightarrow \R$, satisfying the tempered distortion condition, we define the pressure by
\begin{eqnarray*}
P(\varphi):=\lim_{n\rightarrow \infty}\frac{1}{n}\log \sum_{\omega \in \A^n}\exp(S_n(\varphi)(\omega)).
\end{eqnarray*}
\end{defn}
This definition of pressure is essentially the same as that given by Mauldin and Urba\'{n}ski in \cite{MU, GDMS}. We note that the limit always exists, but may be infinite.
Recall that we defined $\psi(x)$ to be the log-derivative, $\psi(x):=\log |T'(x)|$. Given $\alpha>0$ we define $s(\alpha)$ by,
\begin{equation}
s(\alpha):=\inf\left\lbrace s: P(-s \psi)\leq s\alpha \right\rbrace.
\end{equation}
More generally, given a non-negative positive potential $\varphi:\overline{\Lambda}\rightarrow \R_{\geq 0}$, satisfying the tempered distortion condition, we define,
\begin{equation}
s(\varphi):=\inf\left\lbrace s: P(-s (\psi+\varphi))\leq 0 \right\rbrace.
\end{equation}

The project of trying to determine the Hausdorff dimension of $\D_{y}(\varphi)$ began with a series of articles due to Hill and Velani \cite{HV1, HV2, HV3}. Whilst Hill and Velani gave the dimension of $\D_{y}(\varphi)$ for an expanding rational map of the Riemann sphere, the result extends unproblematically to any expanding Markov map with finitely many inverse branches.

\begin{theorem}[Hill, Velani]\label{HV Theorem} Let $T$ be a finite branch expanding Markov map with repeller $\Lambda$ and let $\varphi:\Lambda \rightarrow \R$ a non-negative potential which satisfies the tempered distortion condition. Then, for all $y \in \overline{\Lambda}$ we have $\dim \D_{y}(\varphi)=s(\varphi)$.
\end{theorem}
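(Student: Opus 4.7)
The plan is to prove $\dim \D_y(\varphi) = s(\varphi)$ by matching upper and lower bounds; the upper bound is a direct covering argument, while the lower bound proceeds via a Cantor construction combined with the mass distribution principle.

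\textbf{Upper bound.} For any $s > s(\varphi)$ one has $P(-s(\psi+\varphi)) < 0$. For each $m \in \N$, the sets
\[ V_\omega \cap \{x : |T^n(x) - y| < e^{-S_n(\varphi)(x)}\}, \quad n \geq m,\ \omega \in \A^n, \]
cover $\D_y(\varphi)$. Bounded distortion (Definition \ref{EMR def}(3)) together with tempered distortion of $\varphi$ show each non-empty member to be an interval of diameter at most $e^{-S_n(\psi+\varphi)(\omega) + o(n)}$. Summing $s$-th powers gives a bound of $\sum_{n \geq m} \sum_{\omega \in \A^n} e^{-s S_n(\psi+\varphi)(\omega) + s \cdot o(n)}$, which by strict negativity of the pressure decays geometrically as $m \to \infty$. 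Hence $\Hau^s(\D_y(\varphi)) = 0$ and $\dim \D_y(\varphi) \leq s(\varphi)$.

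\textbf{Lower bound.} For any $s < s(\varphi)$, $P(-s(\psi+\varphi)) > 0$. I would construct a Cantor set $K \subset \D_y(\varphi)$ with $\dim K \geq s$. Finiteness of $\A$ bounds cylinder contraction ratios uniformly, so $y \in \overline{\Lambda}$ yields a supply of cylinders $V_{\tau^{(k)}} \subset B(y, \delta_k)$ with $|V_{\tau^{(k)}}| \geq c\,\delta_k$ for any prescribed $\delta_k \downarrow 0$. Pick a rapidly growing sequence $N_1 \ll N_2 \ll \cdots$ and inductively form collections $\F_k$ of level-$N_k$ cylinders of the form $V_{\omega \tau^{(k)}}$ with $|\omega| = N_k - m_k$ and $\delta_k = e^{-S_{N_k - m_k}(\varphi)(\omega)}$: the Markov property gives $T^{N_k - m_k}(V_{\omega \tau^{(k)}}) = V_{\tau^{(k)}} \subset B(y, \delta_k)$, so each $V_{\omega \tau^{(k)}}$ lies in the level-$(N_k - m_k)$ target set and $K = \bigcap_k \bigcup_{V_\sigma \in \F_k} V_\sigma \subset \D_y(\varphi)$. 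Transition $\F_k \to \F_{k+1}$ by attaching every sub-word $\omega' \in \A^{N_{k+1} - N_k - m_{k+1}}$ followed by $\tau^{(k+1)}$. A Gibbs-type measure $\mu$ on $K$ with step weights proportional to $e^{-s S_{L_j}(\psi + \varphi)(\omega_j)}$ (where $L_j$ denotes the free-choice block length at stage $j$) is well-defined since positive pressure makes each partition function grow exponentially. Standard tempered-distortion estimates then yield a local mass bound $\mu(B(x,r)) \leq C_\varepsilon\, r^{s-\varepsilon}$ for $x \in K$, so $\dim K \geq s - \varepsilon$ by the mass distribution principle; letting $\varepsilon \downarrow 0$ and $s \nearrow s(\varphi)$ completes the proof.

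\textbf{Main obstacle.} The delicate step is the local mass estimate: the approximating suffixes $\tau^{(k)}$ have length $m_k$ of the same order as the free-choice blocks, contributing substantively to $S_{N_k}(\psi)(\sigma)$. Careful calibration of the weights against cylinder sizes, using tempered distortion to control the variations of $S_n(\psi+\varphi)$ on each cylinder, keeps the ratio $\mu(V_\sigma)/|V_\sigma|^s$ bounded uniformly in $k$; choosing $N_{k+1}/N_k$ large enough confines all boundary errors to sub-exponential factors.
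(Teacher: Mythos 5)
The paper cites this theorem from Hill and Velani without giving its own proof; the relevant techniques appear in Sections~4 and~5 of the paper, so I will compare your sketch against those.

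\textbf{Upper bound.} Your covering argument is the same as the one in Section~4 of the paper (Proposition~4.1): cover by the pieces of $n$-cylinders that survive the target condition, bound their diameters by $e^{-S_n(\psi+\varphi)(\omega)+o(n)}$ via distortion, and use $P(-s(\psi+\varphi))<0$ to make the $s$-sum summable. This part is fine.

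\textbf{Lower bound.} The overall shape (nested Cantor set with alternating free and targeting blocks, a measure on it, then the mass distribution principle) matches the paper's construction in Section~5. But the sketch contains an unresolved inconsistency that is precisely where the real work lies, and which the paper's argument is specifically designed to overcome. You set $\delta_k=e^{-S_{N_k-m_k}(\varphi)(\omega)}$, so $\delta_k$ depends on the free-block word $\omega$, yet you simultaneously treat the targeting word $\tau^{(k)}$ and the level $N_k$ as fixed at stage $k$. These cannot all be simultaneously fixed: if $\tau^{(k)}$ (hence $m_k$, hence $N_k$) is independent of $\omega$, then $\delta_k$ must also be chosen uniformly, at worst $\delta_k\le e^{-\max_\omega S_{L_j}(\varphi)(\omega)}$ in order that $V_{\omega\tau^{(k)}}$ actually lies in the shrinking target for \emph{every} free $\omega$. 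With the Gibbs weights $\propto e^{-sS_{L_j}(\psi+\varphi)(\omega_j)}$ you proposed, the local mass ratio over a stage-$k$ cylinder then picks up a factor
\[
\prod_j \frac{\exp\bigl(s\bigl(\max_{\omega}S_{L_j}(\varphi)(\omega)-S_{L_j}(\varphi)(\omega_j)\bigr)\bigr)}{Z_j},
\]
and the numerator discrepancy can be of order $e^{sL_j\,\mathrm{osc}(\varphi)}$, which the partition-function gain $Z_j\approx e^{L_jP(-s(\psi+\varphi))}$ will not in general absorb (the pressure can be arbitrarily small as $s\uparrow s(\varphi)$). ``Tempered distortion'' controls oscillation within a single cylinder, not the spread of Birkhoff sums across different cylinders, so it does not fix this.

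The paper's construction resolves exactly this: it builds a Bernoulli measure with weights $p(\tau)=e^{-t S_k(\psi+\varphi)(\tau)}$ for a slightly larger exponent $t>s$ normalised to a probability, then uses Kolmogorov's LLN together with Egorov to restrict to a set $S_q$ where Birkhoff averages of $S_k(\varphi)$ are within $1/q$ of their mean, uniformly; the targeting radius $r_q$ is then chosen from the mean rather than from any particular word, and the discrepancy above becomes $O(L_j/q)$ rather than $O(L_j)$. (Alternatively you could make $\tau^{(k)}$ and $N_k$ depend on the path, but then the level structure and the case analysis for $\mu(B(x,r))$ at intermediate radii become substantially more delicate, and your sketch does not take this route.) The phrase ``careful calibration\dots keeps the ratio bounded'' names the goal but not the mechanism; as written, the proposed measure does not give the required local mass estimate, and the LLN/Egorov ingredient (or an equivalent) is genuinely missing.
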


Given the neat connection between Diophantine approximation and shrinking target sets for the Gauss map it is natural to try to generalise Theorem \ref{HV Theorem} to the setting of expanding Markov maps with an infinite number of inverse branches. However, for such maps things can become much more delicate.

Note that we always have $\Lambda_{\circ}\subseteq \Lambda \subseteq \overline{\Lambda}$. Indeed, when $T$ is a finite branch Markov map $\Lambda_{\circ}= \Lambda = \overline{\Lambda}$, up to a countable set. However, for Markov maps with infinitely many inverse branches both of these containments may be strict.

In \cite{U} Urba\'{n}ski proves the following extention of Theorem \ref{HV Theorem} to points $y \in \Lambda_{\circ}$ for an infinite branch expanding Markov map.

\begin{theorem}[Urba\'{n}ski]\label{U theorem} Let $T$ be an expanding Markov map with repeller $\Lambda$ and let $\varphi:\Lambda \rightarrow \R$ a non-negative potential which satisfies the tempered distortion condition. Then, for every $y \in \Lambda_{\circ}$ we have $\dim \D_{y}(\varphi)=s(\varphi)$.
\end{theorem}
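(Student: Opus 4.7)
The plan is to show $\dim \D_y(\varphi)\leq s(\varphi)$ and $\dim \D_y(\varphi)\geq s(\varphi)$ separately. The condition $y\in \Lambda_\circ$, which from the preceding exposition I take to mean those $y\in \Lambda$ with $M:=\sup_{n\geq 0}\log|T'(T^n y)|<\infty$, plays no role in the upper bound and is indispensable for the lower bound, where it controls the geometry of cylinders containing $y$. For the upper bound, I fix $s>s(\varphi)$, so that $P(-s(\psi+\varphi))<0$. For each $n$ and $\omega\in\A^n$ the set $\{x\in V_\omega:|T^n(x)-y|<e^{-S_n\varphi(x)}\}$ is contained in a single component $U_{n,\omega}$ of $T^{-n}B(y,e^{-S_n\varphi(\omega)+\var_n(S_n\varphi)})$. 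By tempered distortion of $\varphi$ together with conditions (2) and (3) of Definition \ref{EMR def} one obtains $|U_{n,\omega}|\leq e^{-S_n(\psi+\varphi)(\omega)+o(n)}$. Since $\D_y(\varphi)\subseteq\bigcup_{n\geq m}\bigcup_{\omega\in\A^n}U_{n,\omega}$ for every $m$, the definition of pressure makes $\sum_{\omega\in\A^n}|U_{n,\omega}|^s$ decay exponentially in $n$, and summing over $n\geq m$ and letting $m\to\infty$ yields $\Hau^s(\D_y(\varphi))=0$.

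For the lower bound, I fix $s<s(\varphi)$, so that $P(-s(\psi+\varphi))>0$. Approximating the pressure from within by finite subsystems in the Mauldin--Urba\'nski sense, I select a finite alphabet $\B\subseteq\A$ whose subsystem pressure of $-s(\psi+\varphi)$ is still strictly positive, and let $\mu$ be the associated ergodic Gibbs measure, whose natural dimension is at least $s-\epsilon$. Since $y\in \Lambda_\circ$, there exists $\bi(k)\in\A^k$ with $y\in V_{\bi(k)}$ and $|V_{\bi(k)}|\geq c\,e^{-Mk}$. I then build a Cantor set $\F\subseteq\D_y(\varphi)$ by recursion on levels: given a cylinder $V_\omega$ with $\omega\in\B^{n_\ell}$, I refine it to the cylinders $V_{\omega\bi(k_\ell)\sigma}$ for all $\sigma\in\B^{m_\ell}$, choosing $k_\ell$ so that $|V_{\bi(k_\ell)}|<e^{-S_{n_\ell}\varphi(\omega)}$. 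Every $x\in V_{\omega\bi(k_\ell)\sigma}$ then satisfies $T^{n_\ell}(x)\in V_{\bi(k_\ell)}$, so $|T^{n_\ell}(x)-y|<e^{-S_{n_\ell}\varphi(x)}$, and since this happens at infinitely many levels, $\F\subseteq\D_y(\varphi)$. Placing on $\F$ the natural product measure that at each level apportions mass to the $\sigma$-coordinate according to $\mu$, the Gibbs property of $\mu$, tempered distortion of $\psi+\varphi$, and the mass distribution principle combine to give $\dim \F\geq s-\epsilon$ whenever the parameters $(n_\ell,k_\ell,m_\ell)$ are chosen with $m_\ell/k_\ell\to\infty$.

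The hard part will be balancing the two scales in this Cantor construction. The inserted blocks $\bi(k_\ell)$ supply geometric length at least $c\,e^{-Mk_\ell}$ but contribute no dimension, whereas the $\B$-blocks contribute dimension $s-\epsilon$ at the rate prescribed by $\mu$. The recursion has to pick $k_\ell\to\infty$ fast enough that $|V_{\bi(k_\ell)}|<e^{-S_{n_\ell}\varphi(\omega)}$ holds uniformly across the surviving cylinders, yet $m_\ell/k_\ell\to\infty$ so that $\mu$ governs the final dimension rather than the interpolations diluting it. Precisely here the assumption $y\in\Lambda_\circ$ becomes essential: the uniform bound $|V_{\bi(k)}|\geq c\,e^{-Mk}$ is what renders both requirements simultaneously achievable. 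Without it, the interpolation blocks could shrink super-exponentially, forcing $k_\ell$ to grow so rapidly that either the pullback targets fail to contain $V_{\bi(k_\ell)}$ or the bookkeeping for the mass distribution principle breaks down.
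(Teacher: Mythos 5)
Your argument is correct in outline, and it is worth noting that the paper itself does not re-prove Theorem \ref{U theorem} directly: it cites Urba\'nski and instead proves the stronger Theorem \ref{ifs} (all $y\in\Lambda$, not just $\Lambda_\circ$) via Proposition \ref{Technical Lemma} and Lemma \ref{ifs lemma}. Your proof is the more elementary, special-purpose route that exploits the $\Lambda_\circ$ hypothesis, closer in spirit to Urba\'nski's original. The essential difference is where the ``target block'' comes from. You insert a \emph{single} cylinder $V_{\bi(k_\ell)}$ coding $y$, and the bounded-derivative condition is exactly what guarantees $|V_{\bi(k_\ell)}|$ can be chosen comparable to $e^{-S_{n_\ell}\varphi}$ with only bounded overshoot (granularity control). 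The paper, needing to handle arbitrary $y\in\Lambda$ where the cylinder coding $y$ may occupy a vanishingly small fraction of any ball $B(y,r)$, replaces the single cylinder by the collection $\RR_q$ of \emph{all} cylinders of a fixed generation inside $B(y,r_q)$ (the ``positive upper cylinder density'' condition), and distributes the mass across $\RR_q$ in proportion to diameter. That diameter-proportional weighting is what rescues the mass distribution principle when the cylinder through $y$ is tiny; your single-cylinder construction would fail for such $y$, which is exactly the extra generality Theorem \ref{ifs} buys.

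Two small imprecisions in your sketch. First, the condition ``$m_\ell/k_\ell\to\infty$'' needs to be read together with the implicit recursion $k_\ell\asymp S_{n_{\ell-1}}\varphi$: unwinding it, one finds $m_\ell$ must grow so fast that $m_\ell\gg\sum_{j<\ell}m_j$, i.e.\ super-exponentially; the paper makes the analogous choice explicit through the definition of $\alpha_q$. Second, your description of the failure mode without $\Lambda_\circ$ is slightly off: if the cylinders around $y$ shrink super-exponentially then $k_\ell$ grows \emph{slower}, not faster, but the first $k$ with $|V_{\bi(k)}|<e^{-S_{n_\ell}\varphi}$ can overshoot the target radius by an unbounded factor, and it is this overshoot in the Lyapunov contribution of $\bi(k_\ell)$ (rather than the growth of $k_\ell$ itself) that ruins the dimension estimate. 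Your intuition that $\Lambda_\circ$ is what prevents this is nonetheless correct.
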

In terms of dimension $\Lambda_{\circ}$ is a large set, with $\dim \Lambda_{\circ}= \dim \Lambda$ \cite{MU}. However, it follows from Bowen's equation combined with the strict monotonicity of the pressure function for finite iterated function systems (see \cite[Chapter 5]{F2}) that for any $T$ ergodic measure with $\dim \mu=\dim \Lambda$, $\mu(\Lambda_{\circ})=0$. For example, when $T$ is the Gauss map and $\mathcal{G}$ the Gauss measure, which is ergodic and equivalent to Lebesgue measure $\mathcal{L}$, then $\Lambda_{\circ}$ is the set of badly approximable numbers with $\dim \Lambda_{\circ}=1$ and $\mathcal{L}(\Lambda_{\circ})=\mathcal{G}(\Lambda_{\circ})=0$.

Our main theorem extends the above result to all $y\in \Lambda$.
\begin{theorem}\label{ifs} Let $T$ be an expanding Markov map with repeller $\Lambda$ and let $\varphi:\Lambda \rightarrow \R$ be a non-negative potential which satisfies the tempered distortion condition.  Then, for every $y \in \Lambda$ we have $\dim \D_{y}(\varphi)=s(\varphi)$.
\end{theorem}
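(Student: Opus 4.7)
The inequality $\dim \D_y(\varphi) \leq s(\varphi)$ holds for every $y \in [0,1]$ via the standard covering argument: for any $s>s(\varphi)$ we have $P(-s(\psi+\varphi))<0$, and the natural cover $\{V_\omega \cap T^{-n}B(y,e^{-S_n\varphi(\omega)}) : \omega \in \A^n,\, n \geq m\}$ has pieces of diameter comparable to $e^{-S_n(\psi+\varphi)(\omega)}$ by Definition \ref{EMR def}(3), yielding $\Hau^s(\D_y(\varphi))=0$. All the genuine work is in the reverse inequality, which, as the counterexample announced in the abstract shows, really does use $y \in \Lambda$ rather than $y \in \overline{\Lambda}$.

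\textbf{Lower bound strategy.} Fix $s<s(\varphi)$, so that $P(-s(\psi+\varphi))>0$. By the standard finite-subsystem approximation for Mauldin--Urba\'{n}ski pressure, choose a finite $\A_F \subset \A$ with $P_F(-s(\psi+\varphi))>0$; this gives a uniformly contracting, uniformly distorted finite-branch subsystem carrying a Gibbs measure $\mu_F$. For $y \in \Lambda_\circ$ one has Theorem \ref{U theorem}, built on the Hill--Velani construction underlying Theorem \ref{HV Theorem}, already yielding $\dim \D_y(\varphi)\geq s$ via a Cantor subset contained in the attractor of $\A_F$. To extend this to a general $y \in \Lambda$, the crucial extra datum is the forward address $\tau(y)=(y_0,y_1,\ldots)\in \A^{\N}$ guaranteed by $T^n(y)$ being defined for all $n$: the cylinders $V_{\tau(y)|_j}$ nest down to $\{y\}$ with diameters tending to zero.

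\textbf{Construction and main obstacle.} Build a Cantor subset of $\D_y(\varphi)$ by inductively grafting prefixes of $\tau(y)$ onto finite-subsystem words. At stage $k$, inside each previously selected cylinder $V_\omega$ of length $n_{k-1}$, for each extension $\eta \in \A_F^{m_k}$ choose a minimal tail length $l=l(\omega,\eta)$ so that
\[
\diam V_{\tau(y)|_l} < \tfrac12\, e^{-\sup_{V_{\omega\eta}} S_{n_k}\varphi},\qquad n_k := n_{k-1}+m_k.
\]
Every point of $V_{\omega\eta\,\tau(y)|_l}$ then satisfies $|T^{n_k}(x)-y|<e^{-S_{n_k}\varphi(x)}$ by bounded distortion, providing infinitely many hits of the shrinking target. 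Equip the resulting Cantor set with the Gibbs weights $\mu_F$ on the main words $\eta$, carried unchanged onto the grafted suffixes. The genuine obstacle is then the local-dimension estimate $\mu(B(x,r))\leq Cr^s$: because the grafted suffixes use branches from the full alphabet, their derivatives are not uniformly bounded, which is precisely the phenomenon absent in the finite-branch setting of Theorem \ref{HV Theorem}. The plan is to take the finite-subsystem lengths $m_k$ increasing fast enough that the cumulative suffix length through stage $k$ is $o(n_k)$, and to use the tempered distortion hypothesis $n^{-1}\var_n(S_n(\varphi))\to 0$ together with Definition \ref{EMR def}(3) to show that any small ball either sits inside a single main cylinder, where the finite-subsystem Gibbs estimate applies directly, or meets only thin suffix fibres whose total mass cannot accumulate; the mass distribution principle then delivers $\dim \D_y(\varphi)\geq s$, and letting $s \nearrow s(\varphi)$ completes the proof.
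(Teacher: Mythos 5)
Your lower-bound strategy is in substance the same as the paper's. Grafting prefixes of the symbolic address $\tau(y)$ onto long finite-subsystem blocks is exactly how the paper exploits $y\in\Lambda$: Lemma~\ref{ifs lemma} observes that the nested cylinders around $y$ witness the ``positive upper cylinder density'' property at $y$, and Proposition~\ref{Technical Lemma} then carries out the Cantor construction you describe. The paper's abstraction buys a single technical proposition that also yields Theorem~\ref{dense unit interval}; for Theorem~\ref{ifs} alone your direct route is equivalent. The upper bound is routine and you dispatch it correctly.

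There is, however, a misstatement in your plan that points at the real difficulty. You want the cumulative suffix length through stage $k$ to be $\ordo{n_k}$. That cannot be arranged in general: the grafted suffix must compress $V_{\omega\eta}$ down to diameter roughly $e^{-\sup_{V_{\omega\eta}} S_{n_k}(\psi+\varphi)}$, and since $\diam V_{\tau(y)|_l}$ decays at worst like $\xi^{-l}$, the suffix length $l_k$ is forced to be of order $n_k$ whenever $y$ has moderate digits; it is $O(n_k)$, not $\ordo{n_k}$. What must be $\ordo{n_k}$ — and what the paper's $\alpha_q$ achieves by growing super-exponentially — is the cumulative contribution of stages $1,\dots,k-1$. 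The stage-$k$ suffix itself is handled not by being thin but by being calibrated: within $V_{\omega\eta}$ all the mass lives on $V_{\omega\eta\,\tau(y)|_l}$, whose diameter is $\approx e^{-S_{n_k}(\psi+\varphi)(\omega\eta)}$, so a ball of radius $r$ between the two scales satisfies $\mu(B(x,r))\le \mu(V_{\omega\eta})\approx\bigl(\diam V_{\omega\eta\,\tau(y)|_l}\bigr)^t\le r^t$ with $t>s$. It is the compression exponent, not the shortness of the suffix, that saves the estimate. Beyond this, the paper's Lemma~\ref{Measure Dim Props} and Lemma~\ref{Pointwise Dimension Lemma} carry out a careful case analysis (ball inside a main block, ball straddling the suffix scale, ball meeting boundary cylinders of the target collection) with the tempered-distortion and Egorov/SLLN error terms tracked explicitly — and the Bernoulli measure with the SLLN/Egorov argument (\ref{KLimits1})--(\ref{KLimits3}) is what gives the level-by-level Birkhoff-sum control that a bare appeal to a Gibbs measure on $\A_F$ does not. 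Your sketch identifies the right obstacle and the right construction, but the bookkeeping that constitutes most of the proof is not yet done.
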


Note that in Example \ref{Gauss example} $0 \notin \Lambda=\R\backslash \mathbb{Q}$, so it is clear that for certain maps $\dim D_{y}(\varphi)=s(\varphi)$ holds for $y \in \overline{\Lambda} \backslash \Lambda$. The following theorem shows that this holds whenever $\Lambda$ is dense in the unit interval.

\begin{theorem}\label{dense unit interval} Let $T$ be an expanding Markov map with a repeller $\Lambda$ satisfying $\overline{\Lambda}=[0,1]$ and let $\varphi:\Lambda \rightarrow \R$ a non-negative potential which satisfies the tempered distortion condition. Then, for every $y \in [0,1]$ we have $\dim D_{y}(\varphi)=s(\varphi)$.
\end{theorem}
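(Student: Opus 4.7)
First I would establish $\dim \D_y(\varphi) \le s(\varphi)$ by a covering argument analogous to those in Hill--Velani and Urba\'nski: for any $s > s(\varphi)$ and any $m \in \N$, cover $\D_y(\varphi)$ by the sets $V_\omega \cap T^{-n} B(y, e^{-S_n \varphi(\omega)})$ for $n \ge m$ and $\omega \in \A^n$; by tempered distortion their diameters are bounded up to a constant by $e^{-S_n(\psi+\varphi)(\omega)}$, so that the $s$-Hausdorff sum is dominated by $\sum_{n \ge m} e^{n(P(-s(\psi+\varphi))+\ordo{1})}$, which tends to zero as $m \to \infty$ because $P(-s(\psi+\varphi)) < 0$. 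This bound uses nothing about the location of $y$. For the lower bound, the case $y \in \Lambda$ is precisely Theorem~\ref{ifs}, so it remains to handle $y \in [0,1] \setminus \Lambda$.

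\textbf{Target cylinders from density.} The hypothesis $\overline{\Lambda} = [0,1]$ enters through the production of ``target cylinders'' accumulating at $y$. Given $\epsilon > 0$, density of $\Lambda$ in $[0,1]$ yields a point $z \in \Lambda$ within $\epsilon/2$ of $y$; this $z$ lies in some $V_i$, and by the uniform expansion of Condition~(2) of Definition~\ref{EMR def}, cylinders of sufficiently high level through $z$ have diameter below $\epsilon/2$. Thus for every $\epsilon > 0$ and every $k_0 \in \N$ there is a word $\tau \in \A^k$ with $k \ge k_0$ and $V_\tau \subset B(y,\epsilon)$. These target cylinders will play the role that shrinking balls $B(y, e^{-n\alpha})$ around $y \in \Lambda$ play in the proof of Theorem~\ref{ifs}.

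\textbf{Cantor construction and main obstacle.} For each $s < s(\varphi)$ I would then construct a Cantor-like subset of $\D_y(\varphi)$ supporting a measure of dimension at least $s$, and conclude by letting $s \nearrow s(\varphi)$. Fix a finite sub-alphabet $\B \subset \A$ on which the induced pressure satisfies $P_{\B}(-s(\psi+\varphi)) > 0$, available by the strict monotonicity of finite-branch pressure. Inductively concatenate free blocks $\omega^{(j)} \in \B^{N_j}$, which supply the entropy, with guiding suffixes $\tau_j$ chosen so that the accumulated cylinder is sent by $T^{n_j}$ inside $V_{\tau_j} \subset B(y, e^{-S_{n_j}\varphi})$; this ensures the limiting Cantor set sits inside $\D_y(\varphi)$. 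The natural product measure on the tree, combined with tempered distortion and the Mass Distribution Principle, then delivers $\dim \D_y(\varphi) \ge s$. The main obstacle will be the joint scheduling of the lengths $N_j$ and $|\tau_j|$: the $|\tau_j|$ must be large enough that $\diam V_{\tau_j} < e^{-S_{n_j}\varphi}$ (which is why $\overline{\Lambda} = [0,1]$ is required), while $N_j \gg |\tau_j|$ is needed so the guiding positions do not erode the entropy from the free blocks. This is the same balance managed in Theorem~\ref{ifs}; the only new ingredient is that the guiding cylinders are now produced by the density hypothesis instead of by the orbit of $y$, which is precisely why the counterexample mentioned in the abstract requires $\overline{\Lambda} \ne [0,1]$.
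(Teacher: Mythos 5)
Your overall structure (covering argument for the upper bound, reduction to $y \in [0,1]\setminus\Lambda$, a Cantor-type construction with alternating ``free'' and ``guiding'' blocks for the lower bound) matches the paper, but the proposal contains a genuine gap in the way the density hypothesis is used.

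You extract from $\overline{\Lambda}=[0,1]$ only a \emph{qualitative} statement: for every $\epsilon>0$ there is a cylinder $V_\tau\subset B(y,\epsilon)$, and you then schedule a \emph{single} guiding suffix $\tau_j$ at each stage. This is not enough. Write $r_j=e^{-S_{n_j}\varphi}$ for the target radius at stage $j$, and $\lambda_j=|\tau_j|$. In the Cantor construction, the free block of length $n_j$ contributes roughly $n_j h(\mu)$ to $-\log\W$ and roughly $n_j\int\psi\,d\mu$ to $-\log\diam$, giving local exponent $t$; the guiding block contributes nothing to $-\log\W$ (there is only one continuation) but adds $-\log\diam(V_{\tau_j})$ to $-\log\diam$. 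The resulting local exponent is therefore
\begin{equation*}
\frac{t\bigl(S_{n_j}(\psi)+n_j\int\varphi\,d\mu\bigr)}{S_{n_j}(\psi)-\log\diam(V_{\tau_j})},
\end{equation*}
which stays at $t$ only if $-\log\diam(V_{\tau_j})\lesssim n_j\int\varphi\,d\mu\approx-\log r_j$, i.e.\ $\diam(V_{\tau_j})$ must be \emph{comparable to} $r_j$, not merely smaller than it. Your obstacle analysis has this backwards: you worry that $|\tau_j|$ should be large enough that $\diam V_{\tau_j}<e^{-S_{n_j}\varphi}$ and that $N_j\gg|\tau_j|$, but $N_j\gg|\tau_j|$ cannot save you when the system has infinitely many branches, because $-\log\diam(V_{\tau_j})$ is not controlled by $|\tau_j|$ --- a single cylinder of moderate level inside $B(y,r_j)$ can have diameter drastically smaller than $r_j$ (the branch derivative is unbounded). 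For $y\in\Lambda$ this problem disappears because the cylinder through $y$ itself has diameter comparable to the ball, which is exactly why Theorem~\ref{ifs} is easier; for $y\notin\Lambda$ no such cylinder need exist.

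The paper's resolution is a \emph{quantitative} density condition (``positive upper cylinder density''): for a sequence of radii $r\to 0$, the level $\lambda_r$ is kept of order $-\log r$, and one requires a whole \emph{collection} $\RR$ of level-$\lambda_r$ cylinders inside $B(y,r)$ with $\sum_{A\in\RR}\diam(A)\gtrsim r$. In the Cantor construction the mass at the guiding step is then distributed over $\RR$ proportionally to diameter (the factor $q(\omega,\tau)$), so the guiding block contributes roughly $-\log q(\omega,\tau)\approx-\log\diam(V_\tau)+\log r$ of additional entropy, which (after the estimate $-\log q\ge -t\log q$ valid for $t<1$) exactly offsets the extra shrinkage in diameter, and the exponent $t$ survives. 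It is this collection and the proportional-to-diameter reweighting that are missing from your construction, and they are precisely what the hypothesis $\overline{\Lambda}=[0,1]$ is invoked to supply in Lemma~\ref{dense unit interval lemma}, rather than the mere existence of some cylinder near $y$.
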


Returning to Example \ref{Gauss example} we let $T$ denote the Gauss map and $\psi_{\alpha}:=\left(\frac{\alpha}{2}-1\right)\psi$ and let $\alpha>2$. By the Jar\'{n}ik Besicovitch theorem \cite{J, B} we have $\dim D_0\left(\psi_{\alpha}\right)=\frac{2}{\alpha}$. It follows from Theorem \ref{U theorem} \cite{U} that $\dim \D_y\left(\psi_{\alpha}\right)=\frac{2}{\alpha}$ also holds for all badly approximable numbers $y$. By Theorem \ref{dense unit interval} we see that $\dim \D_y\left(\psi_{\alpha}\right)=\frac{2}{\alpha}$ for all $y \in [0,1]$.

We remark that Bing Li, BaoWei Wang, Jun Wu, Jian Xu have independently obtained a proof of Theorem \ref{dense unit interval} in the special case in which $T$ is the Gauss map, as well some interesting results concerning targets which shrink at a super-exponential rate \cite{BBJJ}. However, the methods used in \cite{BBJJ} rely upon certain properties of continued fractions which do not hold in full generality.

Now suppose that $\overline{\Lambda} \neq [0,1]$ and $y \in \overline{\Lambda}\backslash \Lambda$. It might seem reasonable to conjecture that again $\dim \D_{y}(\varphi)=s(\varphi)$. However this is not always the case and, as the following theorem demonstrates, this conjecture fails in rather a dramatic way.

Given $\Phi:\N \rightarrow \R_+$ we define,
\begin{eqnarray*}
\Ss_y(\Phi):=\bigcap_{m\in\N} \bigcup_{n\geq m}\left\lbrace x\in X : d(T^n(x),y)< \Phi(n) \right\rbrace.
\end{eqnarray*}

\begin{theorem}\label{CE} Let $\Phi: \N \rightarrow \R_{>0}$ be any strictly decreasing function satisfying $\lim_{n \rightarrow \infty} \Phi(n)=0$. Then, for each $\beta \in (0,1)$ there exists an expanding Markov map $T$ with a repeller $\Lambda$ with $\dim \Lambda = \beta$ together with a point $y \in \overline {\Lambda}$ satisfying $\dim \Sh_{y}(\Phi)=0$.
\end{theorem}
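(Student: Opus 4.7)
The plan is to construct, for the given $\Phi$ and $\beta$, an explicit piecewise linear expanding Markov map. Without loss of generality take $y=0$. Let $\V$ consist of two types of disjoint full branches, each sent linearly and bijectively onto $[0,1]$ by $T$: first, finitely many intervals $U_1,\ldots,U_M\subset[1/2,1]$ of common diameter $r$; second, for every $n$ with $\Phi(n)<1/2$, a block $S_n$ of $2^n$ disjoint intervals of common diameter $\gamma_n:=(\Phi(n)-\Phi(n+1))\cdot 2^{-n^2-n-1}$ packed into the annulus $[\Phi(n+1),\Phi(n))$. Since $2^n\gamma_n<\Phi(n)-\Phi(n+1)$, the blocks fit for every admissible $\Phi$. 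Because the blocks cluster at $0$ and no branch contains $0$, we have $y=0\in\overline{\Lambda}\setminus\Lambda$.

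To pin $\dim\Lambda=\beta$, Bowen's equation for the infinite IFS (see \cite{MU,GDMS}) gives $\dim\Lambda$ as the unique $s$ with $\sum_{V\in\V}|V|^s=1$. The accumulation contribution $\sigma:=\sum_n 2^n\gamma_n^\beta$ converges because the $2^{-n^2}$-factor in $\gamma_n$ dominates; I then pick $M$ large enough and $r\leq 1/(2M)$ with $Mr^\beta=1-\sigma$, so that $\dim\Lambda=\beta$. For the shrinking-target bound, fix $s>0$ and consider $A_n:=\{x\in\Lambda:|T^n(x)-y|<\Phi(n)\}$. Each $x\in A_n$ sits inside a length-$(n+1)$ cylinder $V_{\omega\cdot\iota}$ with $\omega\in\A^n$ and $V_\iota\cap B(y,\Phi(n))\neq\emptyset$, and by construction the latter forces $V_\iota\in S_m$ for some $m\geq n$. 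Linearity gives $|V_{\omega\cdot\iota}|=|V_\omega|\cdot|V_\iota|$, so the $s$-cost of this cover is
\begin{equation*}
  C_s^n\sum_{m\geq n}2^m\gamma_m^s,\qquad C_s:=\sum_{V\in\V}|V|^s.
\end{equation*}
For every $s>0$, $C_s<\infty$ (again via the $2^{-n^2}$-factor), while the tail is bounded by a constant multiple of $2^{n-n^2 s}$, so the product decays super-exponentially in $n$. Cylinder diameters tend to zero, so summing over $n\geq m$ and letting $m\to\infty$, a Borel--Cantelli type argument for Hausdorff measure yields $\Hau^s(\Ss_y(\Phi))=0$, and hence $\dim\Ss_y(\Phi)=0$ since $s>0$ was arbitrary.

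The main obstacle is the competing demands on $\gamma_n$ across arbitrary $\Phi$: on one hand, $\gamma_n$ must be super-geometrically small in $n$ so that the cover tail beats $C_s^n$ at every $s>0$ (note that $C_s\to\infty$ as $s\to 0^+$, since infinitely many branches force $C_0=\infty$); on the other hand, $2^n\gamma_n$ must not exceed $\Phi(n)-\Phi(n+1)$, which becomes delicate for slowly decreasing $\Phi$. The factor $2^{-n^2-n-1}$ satisfies both constraints uniformly in $\Phi$, which is exactly why no analogue of Theorem \ref{U theorem} can hold at points of $\overline{\Lambda}\setminus\Lambda$ in general.
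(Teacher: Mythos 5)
Your construction is structurally the same as the paper's: an affine full-branch Markov map whose domain intervals cluster at $y=0$, with shell diameters forced to decay super-exponentially (via a $2^{-n^2}$-type factor) and simultaneously forced to fit inside the annuli $[\Phi(n+1),\Phi(n))$; the dimension is then pinned down by Moran's formula and the shrinking-target set is killed by the same cylinder cover $\{V_{\omega\iota}:\omega\in\A^n,\ \iota \text{ deep}\}$, using that the deep-tail cost $\sum_{m\geq n}2^m\gamma_m^s$ beats $C_s^n$. The cosmetic difference — you use $2^n$ intervals per shell plus $M$ filler branches, the paper uses one per shell plus two filler branches — does not change the mechanism of the argument.

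There is, however, a genuine gap in the step ``pick $M$ large enough and $r\leq 1/(2M)$ with $Mr^\beta=1-\sigma$.'' This requires $\sigma=\sum_n 2^n\gamma_n^\beta<1$, and your remark that the sum ``converges because the $2^{-n^2}$-factor dominates'' establishes convergence but not $\sigma<1$. In fact the bound you have is $2^n\gamma_n^\beta\leq 2^{\,n-\beta(n^2+n+1)}$, and when $\beta$ is small the exponent is positive for all $n\lesssim \beta^{-1}$, so roughly $\beta^{-1}$ terms are each larger than $1$; concretely, if $\Phi(n)=2^{-n}$ (so blocks start at $n=1$) and $\beta=1/10$, the partial sums already exceed $1$ by $n=2$. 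Then $1-\sigma\leq 0$ and no choice of $M,r$ makes $Mr^\beta=1-\sigma$ with $r>0$. The fix is exactly the truncation the paper uses: only introduce the blocks $S_n$ for $n\geq n_0$, where $n_0$ is chosen (depending on $\beta$ and $\Phi$) large enough that $\Phi(n_0)$ is below the threshold needed to accommodate the filler branches and that the tail $\sum_{n\geq n_0}2^{\,n-\beta(n^2+n+1)}<1$; the rest of your covering estimate is unaffected, since it only uses $n$ large.

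One small point worth stating explicitly for completeness: since $\Phi(n)\to 0$ there are infinitely many $n$ with $\Phi(n)$ below any threshold, so $\A$ is genuinely infinite and the branch intervals do accumulate at $0$; this is what puts $y=0$ in $\overline{\Lambda}\setminus\Lambda$, and it is also what you use implicitly when you invoke $C_0=\infty$.
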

Thus, even for $\Phi$ which approaches zero at a subexponential rate we can have $\dim \Sh_{y}(\Phi)=0$. We remark that $s(\alpha)$ is always strictly positive.

We begin In Section \ref{UBS} we prove the upper bound in Theorems \ref{ifs} and \ref{dense unit interval} simultaneously with an elementary covering argument. In Section \ref{LBS} we introduce and prove a technical proposition which implies the lower bounds in both Theorems \ref{ifs} and \ref{dense unit interval}. In Section \ref{CES} we prove Theorem \ref{CE}. We conclude in Section \ref{remarks} with some remarks.

\section{Infinite iterated function systems}

In order to make the proof more transparent we shall employ the language of iterated function systems.

Let $T:\cup_{i \in \A}V_i\rightarrow [0,1]$ be a countable Markov map. We associate an iterated function system $\left\lbrace \phi_i \right\rbrace_{i \in \A}$ corresponding to $T$ in the following way. For each $i \in \A$ we let $\phi_i:[0,1] \rightarrow \overline{V}_i$ denote the unique $C^1$ map satisfying $\phi_i \circ T(x) = x$ for all $x \in V_i$. 

Let $\Sigma$ denote symbolic space $\A^{\N}$ endowed with the product topology and let $\sigma: \Sigma \rightarrow \Sigma$ denote the left shift operator. Given an infinite string $\omega=(\omega_{\nu})_{\nu\in\N} \in \Sigma$ and $m,n\in \N$ we let $m|\omega|n$ denote the word $(\omega_{\nu})_{\nu=m+1}^n\in \A^{n-m}$. Given $\tau=(\tau_1,\cdots,\tau_n)\in\A^n$ for some $n\in\N$ we let $\phi_{\tau}:=\phi_{\tau_1}\circ \cdots \circ \phi_{\tau_n}$. Sets of the form $\phi_{\tau}([0,1])$ are referred to as cylinder sets.

Take $\omega \in \A^{\N}$. Note that by definition \ref{EMR def} (2) we have $\diam (\phi_{\omega_n}([0,1]))\leq \xi^{-n}$ for all $n\geq N$. Thus, we may define,
\begin{equation*}
\pi(\omega):= \bigcap_{n\in \N} \phi_{\omega|n}([0,1]).
\end{equation*}
This defines a continuous map $\pi:\Sigma \rightarrow [0,1]$. 

Since the intervals $\left\lbrace V_i\right\rbrace_{i \in \A}$ have disjoint interiors the iterated function system $\{\phi_i\}_{i \in \A}$ satisfies the open set condition (see \cite[Section 9.2]{F1}) and $\pi(\Sigma)\backslash \Lambda$ is countable. By definition \ref{EMR def} (1) we have $T\circ \pi(\omega)=\pi\circ\sigma (\omega)$ for all $\omega\in \pi^{-1}\left(\Lambda\right)$. Thus, $T: \Lambda \rightarrow \Lambda$ and $\sigma: \Sigma \rightarrow \Sigma$ are conjugate up to a countable set.

In Definition \ref{pressure def} we have used a slightly modified version of the definition given in \cite[(2.1)]{GDMS}. Nevertheless, the following theorems may be proved in essentially the same way as the proofs given in \cite{GDMS}.

\begin{theorem}[Mauldin, Urba\'{n}ski]\label{Bowen's equation} Given a countable Markov map $T$ with repeller $\Lambda$ we have $\dim \Lambda=\inf\left\lbrace s:P(-s\psi)\leq 0\right\rbrace$.
\end{theorem}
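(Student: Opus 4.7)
The plan is to prove Bowen's equation by establishing the upper and lower bounds on $\dim \Lambda$ separately, adapting the strategy of Mauldin and Urba\'nski to the slightly modified pressure in Definition \ref{pressure def}. Set $s_0 := \inf\{s : P(-s\psi) \leq 0\}$; observe that $s \mapsto P(-s\psi)$ is strictly decreasing, as for $t > s$ the uniform expansion in condition (2) of Definition \ref{EMR def} gives $P(-t\psi) \leq P(-s\psi) - (t-s)\log\xi$. In particular, $P(-s\psi) < 0$ for every $s > s_0$ and $P(-s\psi) > 0$ for every $s < s_0$.

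For the upper bound $\dim \Lambda \leq s_0$ I would use the natural cover of $\Lambda$ by cylinder sets $\{\phi_\omega([0,1])\}_{\omega \in \A^n}$. The tempered distortion of $\psi$ (from condition (3) of Definition \ref{EMR def}) yields a uniform constant $C$ with $\diam(\phi_\omega([0,1])) \leq C \exp(-S_n(\psi)(\omega))$. Fix $s > s_0$. Since $P(-s\psi) < 0$ there exist $\epsilon>0$ and $N_0$ with $\sum_{\omega \in \A^n}\exp(S_n(-s\psi)(\omega)) \leq e^{-n\epsilon}$ for all $n \geq N_0$, and therefore
\begin{equation*}
\sum_{\omega \in \A^n}\diam(\phi_\omega([0,1]))^s \leq C^s \sum_{\omega \in \A^n}\exp(S_n(-s\psi)(\omega)) \leq C^s e^{-n\epsilon} \to 0,
\end{equation*}
while $\max_{\omega} \diam(\phi_\omega([0,1])) \to 0$ by condition (2). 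Hence $\Hau^s(\Lambda) = 0$ and $\dim \Lambda \leq s$; letting $s \searrow s_0$ finishes this direction.

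For the lower bound $\dim \Lambda \geq s_0$ I would approximate by finite subsystems. For each finite $F \subset \A$ the restriction of $T$ to cylinders indexed by $F$ is a finite branch expanding Markov map with repeller $\Lambda_F \subset \Lambda$. The classical Bowen equation for finite conformal iterated function systems (see \cite[Chapter 5]{F2}) gives $\dim \Lambda_F = s_F$, where $s_F := \inf\{s : P_F(-s\psi) \leq 0\}$ and $P_F$ is the pressure computed over words in $F^n$. Monotonicity of Hausdorff dimension then gives $\dim \Lambda \geq \sup_F s_F$, so matters reduce to $\sup_F s_F = s_0$. Given $s < s_0$ we have $P(-s\psi) > 0$; choosing $n$ with $\sum_{\omega \in \A^n}\exp(S_n(-s\psi)(\omega))$ exceeding a suitable distortion constant, and then truncating to a finite $F \subset \A$ retaining enough of this mass for $F^n$, the submultiplicativity of the partition sums (bounded distortion across concatenations) forces $P_F(-s\psi) > 0$ and hence $s_F > s$.

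The main obstacle lies in this monotone finite approximation step: since $\A$ is infinite and $P(-s\psi)$ may be infinite on part of its domain, one must carefully justify that the truncated partition sums over $F^n$ genuinely approach $\sum_{\omega \in \A^n}\exp(S_n(-s\psi)(\omega))$ as $F$ exhausts $\A$, and that the submultiplicativity constant coming from tempered distortion does not blow up with $n$. These technical points are handled exactly as in \cite{GDMS}, where the pressure in Definition \ref{pressure def} is shown to coincide on the relevant range with the standard Mauldin-Urba\'nski pressure, at which point the claimed equality $\sup_F s_F = s_0$ combined with $\dim \Lambda_F \leq \dim \Lambda$ completes the proof.
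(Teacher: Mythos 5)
The paper does not prove this theorem: it is stated as a known result of Mauldin and Urba\'{n}ski, with the remark preceding it that the proofs of Theorems \ref{Bowen's equation} and \ref{Finite Approx prop} go through as in \cite{GDMS} despite the slightly modified pressure of Definition \ref{pressure def}. Your sketch reconstructs exactly that source's strategy (cylinder covers for the upper bound; exhaustion by finite subsystems, the classical finite-branch Bowen equation, and the finite approximation property of pressure for the lower bound), so in substance it is the same approach the paper points to. One small imprecision worth fixing: tempered distortion does not yield a \emph{uniform} constant $C$ with $\diam(\phi_\omega([0,1])) \leq C\exp(-S_n(\psi)(\omega))$; what it gives is $C = e^{n\rho_n}$ with $\rho_n \to 0$, a subexponential factor. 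Since you immediately take $\tfrac{1}{n}\log$ and let $n\to\infty$, this factor is harmless, but as written the step overstates the distortion estimate. (In fact, since $\diam(\phi_\omega([0,1]))^s \leq \exp(-s\inf_{x\in V_\omega}S_n(\psi)(x)) = \exp(S_n(-s\psi)(\omega))$, no distortion constant is needed at all for the upper bound under the paper's convention $\varphi(\omega):=\sup_{V_\omega}\varphi$.)
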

When $T$ has finitely many branches there is a unique $s(\Lambda)$ such that $P(-s(\Lambda)\psi)=0$ and $\dim \Lambda= s(\Lambda)$. However, Mauldin and Urba\'{n}ski have shown that when $T$ has countably many inverse branches we can have $P(-t \psi) < 0$ for all $t \geq \inf\left\lbrace s:P(-s\psi)\leq 0\right\rbrace$ and consequently there is no such $s(\Lambda)$ (see \cite[Example 5.3]{MU}). Similar examples show that in general there need not be any $s$ satisfying $P(-s(\psi+\varphi))=0$ and consequently we must take $s(\varphi):=\inf\left\lbrace s: P(-s (\psi+\varphi))\leq 0 \right\rbrace$ in Theorems \ref{ifs} and \ref{dense unit interval}.

The pressure $P$ has the following finite approximation property.
\begin{theorem}[Mauldin, Urba\'{n}ski]\label{Finite Approx prop} Let $T$ be a countable Markov map and $\varphi: \Lambda \rightarrow \R$ a potential satisfying the tempered distortion condition. Then $P(\varphi)=\sup\left\lbrace P_{\F}(\varphi): \F \subseteq \A \text{ is a finite set}\right\rbrace.$
\end{theorem}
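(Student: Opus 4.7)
The inequality $\sup_{\F} P_{\F}(\varphi) \le P(\varphi)$ is immediate from the definition: for every finite $\F \subseteq \A$ and every $n$ the inclusion $\F^n \subseteq \A^n$ gives $\sum_{\omega \in \F^n}\exp(S_n(\varphi)(\omega)) \le \sum_{\omega \in \A^n}\exp(S_n(\varphi)(\omega))$, and taking logarithms, dividing by $n$, and letting $n\to\infty$ yields the bound. The content of the theorem is the reverse inequality.

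For that, the plan is to fix $M < P(\varphi)$ and to construct a finite subalphabet $\F\subseteq \A$ with $P_{\F}(\varphi) > M$. First pick $M'$ with $M < M' < P(\varphi)$ and use Definition \ref{pressure def} to choose $n$ so large that $\sum_{\omega \in \A^n}\exp(S_n(\varphi)(\omega)) > e^{nM'}$. (If $P(\varphi) = +\infty$ the same strategy applies with $M'$ arbitrary, and if the series itself is infinite one simply passes to a sufficiently large finite partial sum.) By monotone convergence, extract a \emph{finite} $\mathcal{G} \subseteq \A^n$ satisfying $\sum_{\omega \in \mathcal{G}}\exp(S_n(\varphi)(\omega)) > e^{nM'}$, and let $\F$ be the finite set of letters appearing in the words of $\mathcal{G}$, so that $\mathcal{G} \subseteq \F^n$.

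The decisive step exploits the full-shift Markov structure to concatenate words from $\mathcal{G}$ into valid words in $\F^{kn}$. Applying the tempered distortion property to $S_n\varphi$, for any $\omega_1,\ldots,\omega_k \in \mathcal{G}$ one should obtain
\[
S_{kn}(\varphi)(\omega_1\cdots\omega_k) \geq \sum_{j=1}^{k} S_n(\varphi)(\omega_j) - k\,\var_n(S_n\varphi),
\]
and summing the corresponding exponentials over all $k$-tuples yields
\[
\sum_{\omega \in \F^{kn}}\exp(S_{kn}(\varphi)(\omega)) \geq e^{-k\var_n(S_n\varphi)}\Bigl(\sum_{\omega\in\mathcal{G}}\exp(S_n(\varphi)(\omega))\Bigr)^k.
\]
Dividing by $kn$ and letting $k\to\infty$ gives $P_{\F}(\varphi) \geq M' - \var_n(S_n\varphi)/n$, and since $\var_n(S_n\varphi)/n \to 0$ by tempered distortion, taking $n$ sufficiently large yields $P_{\F}(\varphi) > M$. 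The main obstacle I anticipate is the careful bookkeeping of the distortion error introduced by concatenation, together with the separate case $P(\varphi) = +\infty$ in which the partition sums may diverge for every $n$ and one must work with finite truncations throughout; beyond these technicalities the argument follows the pattern of the original proof in \cite{GDMS}.
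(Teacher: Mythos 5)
The paper does not supply a proof of this theorem; it is cited to Mauldin--Urba\'{n}ski \cite{MU, GDMS}, with only the remark that the arguments there carry over to the slightly modified pressure of Definition \ref{pressure def}. So there is no in-paper proof to compare against, but your argument is correct and is, in substance, the standard Mauldin--Urba\'{n}ski finite-approximation argument: the easy inequality from monotonicity of partition sums, then, given $M<M'<P(\varphi)$, extract a finite subcollection $\mathcal{G}\subseteq\A^n$ of $n$-cylinders whose partial sum already exceeds $e^{nM'}$, let $\F$ be the letters appearing, and use concatenation plus the distortion estimate to push the lower bound through to $P_{\F}(\varphi)$.

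Two small points worth making explicit. First, the concatenation step implicitly uses that $T$ is a \emph{full-branch} Markov map (Definition \ref{EMR def}(1): each $T|_{V_i}$ maps onto $(0,1)$), so $V_{\omega_1\cdots\omega_k}\neq\emptyset$ for every choice of $\omega_1,\dots,\omega_k\in\mathcal{G}$; in a genuine subshift-of-finite-type setting one would need to thread admissible connecting words, which is exactly the finite-primitivity issue the paper flags in the Remarks section. Second, the inequality $S_{kn}(\varphi)(\omega_1\cdots\omega_k)\geq\sum_j S_n(\varphi)(\omega_j)-k\,\var_n(S_n\varphi)$, which you phrase as ``one should obtain,'' does hold: for $x\in V_{\omega_1\cdots\omega_k}$ and each $j$ the point $T^{(j-1)n}x$ lies in $V_{\omega_j}$, so $S_n\varphi(T^{(j-1)n}x)\geq S_n(\varphi)(\omega_j)-\var_n(S_n\varphi)$, and summing over $j$ and then taking the supremum over $x$ gives the claim. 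With $n$ chosen large enough that simultaneously the $n$-th partition sum exceeds $e^{nM'}$ and $\var_n(S_n\varphi)/n<M'-M$, your conclusion $P_{\F}(\varphi)>M$ follows; the $P(\varphi)=+\infty$ case is handled as you say by taking $M'$ arbitrary and working with finite partial sums.
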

\begin{corollary}\label{useful press corollary} Let $\varphi:\Lambda \rightarrow \R$ be a non-negative potential satisfying the tempered distortion condition. Then $P(-s(\varphi)(\psi+\varphi))\leq 0$.
\end{corollary}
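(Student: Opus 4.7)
The plan is to derive the corollary from the Finite Approximation Property (Theorem \ref{Finite Approx prop}) together with the continuity in $s$ of the restricted pressure functions $s \mapsto P_{\F}(-s(\psi+\varphi))$ on finite subalphabets $\F \subseteq \A$. The conceptual content is that, while $s \mapsto P(-s(\psi+\varphi))$ need not be continuous at $s(\varphi)$ (so we cannot simply pass $s \searrow s(\varphi)$ in the defining inequality directly), the finite truncations are well-behaved and upgrade the inequality to the infimum after an application of Theorem \ref{Finite Approx prop}.

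First, by definition of $s(\varphi)$ as an infimum, there is a sequence $s_n \searrow s(\varphi)$ with $P(-s_n(\psi+\varphi)) \leq 0$ for every $n$. Fix an arbitrary finite $\F \subseteq \A$. The finite-alphabet partition function $\sum_{\omega \in \F^n}\exp(S_n(-s(\psi+\varphi))(\omega))$ is a finite sum of functions of $s$ that are continuous (in fact log-convex) in $s$ on the entire real line, since by the tempered distortion condition $S_n(\psi+\varphi)(\omega)$ is finite for every $\omega \in \F^n$; taking $\tfrac{1}{n}\log$ and passing to the limit, the restricted pressure $s \mapsto P_{\F}(-s(\psi+\varphi))$ is a continuous (indeed convex and non-increasing) function of $s$. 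Since $P_{\F} \leq P$ directly from the definitions, we have $P_{\F}(-s_n(\psi+\varphi)) \leq 0$, and letting $n \to \infty$ yields $P_{\F}(-s(\varphi)(\psi+\varphi)) \leq 0$.

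Second, applying Theorem \ref{Finite Approx prop} we conclude
\begin{equation*}
P(-s(\varphi)(\psi+\varphi)) \;=\; \sup_{\F \subseteq \A \text{ finite}} P_{\F}(-s(\varphi)(\psi+\varphi)) \;\leq\; 0,
\end{equation*}
which is the required inequality.

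The main (minor) obstacle is verifying the continuity of $s \mapsto P_{\F}(-s(\psi+\varphi))$ rigorously within the conventions of Definition \ref{pressure def}; this amounts to showing that $\var_n(S_n(\varphi)) < \infty$ for each $n$ on a fixed finite alphabet (immediate from $\var_1(\varphi) < \infty$ plus the tempered distortion property, which controls $\var_n(S_n \psi)$), so that the sums $\sum_{\omega \in \F^n}\exp(S_n(-s(\psi+\varphi))(\omega))$ are finite and depend continuously on $s$. Once this is in hand, the proof is a one-line passage to the limit followed by invocation of the Finite Approximation Property.
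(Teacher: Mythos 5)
Your proof is correct and rests on exactly the same two ingredients as the paper's argument: the Finite Approximation Property (Theorem \ref{Finite Approx prop}) and the continuity of $s \mapsto P_{\F}(-s(\psi+\varphi))$ on each finite subalphabet $\F$. The only difference is presentational: you run a direct argument (pass the inequality to each $P_{\F}$ along a sequence $s_n \searrow s(\varphi)$, use continuity, then take the supremum over $\F$), while the paper packages the identical ideas as a proof by contradiction (assume $P>0$, extract a finite $\F$ with $P_{\F}>0$, use continuity to push to some $t>s(\varphi)$, and contradict the definition of $s(\varphi)$ via monotonicity). Both are valid and essentially equivalent.
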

\begin{proof}
Suppose $P(-s(\varphi)(\psi+\varphi))>0$. Then, by Theorem \ref{Finite Approx prop}. $P_{\F}(-s(\varphi)(\psi+\varphi))>0$ for some finite set $\F \subset \A$. However $\psi+\varphi$ is bounded on $\F^{\N}$ as $\var_1(\psi),\var_1(\varphi)< \infty$, and hence $s \mapsto P_{\F}(-s(\varphi)(\psi+\varphi))$ is continuous. Thus, there exists $t>s(\varphi)$ for which 
\begin{equation*}
P(-t(\psi+\varphi))>0 \geq P_{\F}(-t(\psi+\varphi))>0.
\end{equation*}
Since $\psi+\varphi\geq 0$, $s\mapsto P(-s(\psi+\varphi))$ is non-increasing and hence, $t \leq \inf\left\lbrace s: P(-s (\psi+\varphi))\leq 0 \right\rbrace$. Since $s(\varphi)<t$ this is a contradiction.
\end{proof}

\begin{corollary}\label{positive corollary} Let $T$ be a countable Markov map. Then for all potentials $\varphi: \Lambda \rightarrow \R$,  satisfying the tempered distortion condition, $s(\varphi)>0$.
\end{corollary}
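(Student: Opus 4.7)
The plan is to exhibit an explicit $s_0>0$ with $P(-s_0(\psi+\varphi))>0$, then combine this with the monotonicity of $s\mapsto P(-s(\psi+\varphi))$ (which, exactly as in the proof of Corollary~\ref{useful press corollary}, follows because $\psi+\varphi\geq 0$: recall that condition~(2) of Definition~\ref{EMR def} gives $S_n\psi\geq n\log\xi>0$ for $n\geq N$, so the quantities being summed in the definition of $P(-s(\psi+\varphi))$ really do shrink in $s$). Since $\{s:P(-s(\psi+\varphi))\leq 0\}$ is then upward-closed, strict positivity of $P$ at $s_0$ forces $s(\varphi)\geq s_0>0$.

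To produce $s_0$, I would apply the Finite Approximation Property (Theorem~\ref{Finite Approx prop}) along a two-letter subset. Since $\#\A\geq 2$, choose any $\F=\{i,j\}\subseteq\A$. On the finite subsystem $\F^{\N}$ the potential $\psi+\varphi$ is bounded: $\var_1(\psi),\var_1(\varphi)<\infty$ by hypothesis, and $\F$ has only two one-cylinders. Consequently the partition sums
\[
Z_n(s) \;=\; \sum_{\omega\in\F^n}\exp\bigl(-s\,S_n(\psi+\varphi)(\omega)\bigr)
\]
are finite sums of exponentials depending continuously (indeed analytically) on $s$, and standard thermodynamic formalism on the full shift $\F^{\N}$ shows that $s\mapsto P_{\F}(-s(\psi+\varphi))=\lim_n \tfrac{1}{n}\log Z_n(s)$ is continuous on $[0,\infty)$.

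At $s=0$ one has $P_{\F}(0)=\log\#\F=\log 2>0$. By continuity there exists $s_0>0$ with $P_{\F}(-s_0(\psi+\varphi))>0$, and Theorem~\ref{Finite Approx prop} then yields
\[
P(-s_0(\psi+\varphi)) \;\geq\; P_{\F}(-s_0(\psi+\varphi)) \;>\; 0.
\]
Combined with the upward-closedness of $\{s:P(-s(\psi+\varphi))\leq 0\}$ noted above, this gives $s(\varphi)\geq s_0>0$, as required.

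The only real point to verify is continuity of $P_{\F}(-s(\psi+\varphi))$ at $s=0$; this is the sole place where the finiteness of $\F$ and the bounds $\var_1(\psi),\var_1(\varphi)<\infty$ are used, and it reduces to the fact that pressure on a finite full shift depends continuously on bounded potentials. Everything else is bookkeeping around the definition of $s(\varphi)$ and the results already proved in the excerpt.
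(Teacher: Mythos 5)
Your proof is correct but organizes the argument differently from the paper. The paper's proof of this corollary is a two-line contradiction: it observes directly from Definition \ref{pressure def} that $P(-s(\psi+\varphi))\geq\log 2>0$ for all $s\leq 0$ (since then $-s(\psi+\varphi)\geq 0$, each of the $\geq 2^n$ summands is $\geq 1$), and deduces that $s(\varphi)\leq 0$ would contradict Corollary \ref{useful press corollary}, which asserts $P(-s(\varphi)(\psi+\varphi))\leq 0$. You instead bypass Corollary \ref{useful press corollary} entirely and re-derive the needed continuity fact from Theorem \ref{Finite Approx prop}: restrict to a two-letter subsystem $\F$, note that $s\mapsto P_\F(-s(\psi+\varphi))$ is continuous at $s=0$ (because $\var_1(\psi),\var_1(\varphi)<\infty$ makes the potential bounded on the finitely many one-cylinders of $\F$ --- the same observation the paper makes inside the proof of Corollary \ref{useful press corollary}), push past $s=0$ where $P_\F=\log 2$ to find an explicit $s_0>0$ with $P(-s_0(\psi+\varphi))\geq P_\F(-s_0(\psi+\varphi))>0$, and conclude $s(\varphi)\geq s_0>0$ from monotonicity of $s\mapsto P(-s(\psi+\varphi))$. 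The underlying ingredients coincide, so the two proofs are closely related; yours is slightly longer but self-contained and constructive (it produces a concrete positive lower bound $s_0$), whereas the paper's is shorter because it reuses the corollary it has just proved. Both are valid.
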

\begin{proof} Since $\psi+\varphi\geq 0$ and $\#\A\geq 2$ it follows from Defintion \ref{pressure def} that $P(-s(\psi+\varphi))\geq \log 2>0$ for all $s\leq 0$. If, however, $s(\varphi) \leq 0$ then by Corollary \ref{useful press corollary} there exists some $s\leq 0$ with $P(-s(\psi+\varphi))\leq 0$, which is a contradiction.
\end{proof}

\section{Proof of the upper bound in Theorems \ref{ifs} and \ref{dense unit interval}}\label{UBS}

In this section we use a standard covering argument to prove a uniform upper bound on the dimension of $D_{y}(\varphi)$, which entails the upper bounds in Theorems \ref{ifs} and \ref{dense unit interval}.

Throughout the proof we shall let $\rho_n$ denote
\begin{equation*}
\rho_n:=\max \left\lbrace \var_n(A_n(\psi)),\var_n(A_n(\varphi))\right\rbrace. 
\end{equation*}
Since both $\psi$ and $\varphi$ satisfy the tempered distortion condition, $\lim_{n \rightarrow \infty} \rho_n=0$.

\begin{prop} For every $y \in [0,1]$ we have $\dim D_{y}(\varphi)\leq s(\varphi)$.
\end{prop}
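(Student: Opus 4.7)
The plan is to realise $D_y(\varphi)$ as a countable union of natural cylinder-based covers and then bound the resulting Hausdorff sum using the defining property of $s(\varphi)$ together with bounded distortion.

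First I would fix $y\in[0,1]$ and observe that if $x\in D_y(\varphi)$ then for infinitely many $n$ there exists a unique $\omega\in \A^n$ with $x\in V_\omega$ and $|T^n(x)-y|<e^{-S_n(\varphi)(x)}$. Applying $\phi_\omega$ to this gives the inclusion $x\in U_{n,\omega}:=\phi_\omega\bigl(\overline{B(y,e^{-S_n(\varphi)(\omega)+n\rho_n})}\cap[0,1]\bigr)$, using tempered distortion to pass from $S_n(\varphi)(x)$ to the cylinder value $S_n(\varphi)(\omega)$. Hence, for every $m\in\N$,
\begin{equation*}
D_y(\varphi)\subseteq \bigcup_{n\geq m}\bigcup_{\omega\in \A^n}U_{n,\omega}.
\end{equation*}

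Next I would estimate the diameters. Using the bounded distortion of $\phi_\omega$ together with the relation $|\phi_\omega'|\asymp e^{-S_n(\psi)(\omega)}$ on $[0,1]$, one obtains a constant $C$ independent of $n,\omega$ with
\begin{equation*}
\diam U_{n,\omega}\leq C\,e^{-S_n(\psi+\varphi)(\omega)+2n\rho_n}.
\end{equation*}
Since we also have the uniform bound $\diam U_{n,\omega}\leq C\xi^{-n}$ for $n\geq N$, for any $s>s(\varphi)$ and any $t\in(s(\varphi),s)$ we can split $(\diam U_{n,\omega})^s\leq C^{s-t}\xi^{-n(s-t)}(\diam U_{n,\omega})^t$.

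The heart of the argument is then the pressure estimate. By Corollary \ref{useful press corollary} and the monotonicity of $s\mapsto P(-s(\psi+\varphi))$ (which holds because $\psi+\varphi\geq 0$), we have $P(-t(\psi+\varphi))\leq 0$. Thus, by Definition \ref{pressure def}, for any $\eta>0$ and all sufficiently large $n$,
\begin{equation*}
\sum_{\omega\in \A^n}e^{-tS_n(\psi+\varphi)(\omega)}\leq e^{n\eta}.
\end{equation*}
Choosing $\eta$ small enough that $2t\rho_n+\eta<\tfrac{1}{2}(s-t)\log\xi$ for large $n$ (using $\rho_n\to 0$), the previous two displays combine to give $\sum_{\omega\in \A^n}(\diam U_{n,\omega})^s\leq C'\xi^{-n(s-t)/2}$, which is summable in $n$. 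Consequently $\sum_{n\geq m}\sum_{\omega}(\diam U_{n,\omega})^s\to 0$ and $\max_{n\geq m}\diam U_{n,\omega}\to 0$ as $m\to\infty$, so $\Hau^s(D_y(\varphi))=0$ and $\dim D_y(\varphi)\leq s$. Letting $s\downarrow s(\varphi)$ concludes the proof.

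The main technical point to watch carefully is that we cannot assume $P(-s(\psi+\varphi))$ is strictly negative for $s>s(\varphi)$ (because of the possible jump discontinuity in pressure for infinite branch systems, as alluded to after Theorem \ref{Bowen's equation}); the device of sandwiching an auxiliary exponent $t$ between $s(\varphi)$ and $s$, and using the uniform contraction $\xi^{-n}$ to absorb the $\eta$ and $\rho_n$ errors, is what lets the covering sum decay geometrically even when the pressure is only non-positive rather than strictly negative.
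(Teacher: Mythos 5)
Your proof is correct and takes essentially the same route as the paper: cover $D_y(\varphi)$ by cylinder-based sets of diameter roughly $e^{-S_n(\psi+\varphi)(\omega)}$ (the paper works directly with $V_\omega^{\varphi,n}=\{x\in V_\omega:|T^n(x)-y|<e^{-\inf_{V_\omega}S_n(\varphi)}\}$ while you push a ball through $\phi_\omega$, which is the same set up to distortion constants), and then kill the covering sum by combining the pressure bound with the uniform expansion $|(T^n)'|\geq\xi^n$. The paper packages the contraction rate into the claim $P(-s(\psi+\varphi))<0$ strictly, whereas you keep an auxiliary exponent $t\in(s(\varphi),s)$ explicit in the summation and absorb the $\rho_n$ and $\eta$ errors into $\xi^{-n(s-t)}$ — this is the same device, and both arguments correctly handle the fact that the pressure may only be non-positive (not strictly negative) at $s(\varphi)$.
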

\begin{proof}
For each $n\in \N$ and $\omega \in \A^n$ we define,
\begin{equation}
V_{\omega}^{\varphi,n}:=\left\lbrace x\in V_{\omega}: |T^n(x)-y|< e^{-\inf_{z\in V_{\omega}}S_n(\varphi)(z)}\right\rbrace.
\end{equation}
Clearly every $x\in \D_y(\varphi)$ is in $V_{\omega}^{\varphi,n}$ for infinitely many $n\in \N$ and $\omega \in \A^n$. Moreover, by the mean value theorem we have,
\begin{eqnarray}\label{diam}
\diam(V_{\omega}^{\varphi,n} )&\leq& e^{-\inf_{z\in V_{\omega}}S_n(\phi)(z)-\inf_{z\in V_{\omega}}S_n(\varphi)(z)}\\
\nonumber &\leq& e^{-\inf_{z\in V_{\omega}}S_n(\phi)(z)-\inf_{z\in V_{\omega}}S_n(\varphi)(z)}\\
\nonumber &\leq& e^{\sup_{z\in V_{\omega}}S_n(-(\phi+\varphi))(z)+2n\rho_n}\\
\nonumber &\leq& e^{S_n(-(\phi+\varphi))(\omega)+2n\rho_n}.
\end{eqnarray}
Choose $s>s(\varphi)$, so there exists some $t<s$ with $P(-t(\phi+\varphi))\leq 0$. By condition (2) in definition \ref{EMR def} together with $\varphi\geq 0$ we have $S_n(\phi+\varphi)\geq n\log\xi$ for all sufficiently large $n$ and hence $P(-s(\phi+\varphi))< 0$. Take $\epsilon>0$ with $\epsilon<-P(-s(\phi+\varphi))$. Since $\lim_{n\rightarrow \infty}\rho_n=0$ there exists some $n_0\in\N$ such that for all $n\geq n_0$ we have,
\begin{equation}\label{presscover}
\sum_{\omega \in\A^n} \left\lbrace \exp(S_n(-s(\phi+\varphi))(\omega))\right\rbrace<e^{-n\epsilon-2n s\rho_n}.
\end{equation}
Now choose some $\delta>0$. Since $\rho_n\rightarrow 0$ and $S_n(\phi+\varphi)\geq n\log\xi$ for all sufficiently large $n$, it follows from (\ref{diam}) that we may choose $n_1\geq n_0$ so that for all $n\geq n_1$ $\diam (V_{\omega}^{\varphi,n})<\delta$. Moreover, $\bigcup_{n\geq n_1}\left\lbrace V_{\omega}^{\varphi,n}\right\rbrace_{\omega\in \A^n}$ forms a countable cover of $\D_y(\varphi)$. Applying (\ref{diam}) together with (\ref{presscover}) we see that for all $n_1\geq n_0$,
\begin{eqnarray*}
\sum_{n \geq n_1}\sum_{\omega\in \A^n}\diam (V_{\omega}^{\varphi,n})^s 
&\leq& \sum_{n \geq n_1}\sum_{\omega\in \A^n} e^{\sup_{z\in V_{\omega}}S_n(-s(\varphi+\phi))(z)+2n s\rho_n}\\
&\leq &\sum_{n\geq n_1} e^{-n\epsilon}\leq \sum_{n\geq n_0} e^{-n\epsilon} <\infty.
\end{eqnarray*}
Thus, $\Hau^s_{\delta}(\D_y(\varphi))\leq \sum_{n\geq n_0} e^{-n\epsilon}$ for all $\delta>0$ and hence $\Hau^s(\D_y(\varphi))\leq \sum_{n\geq n_0} e^{-n\epsilon}<\infty$. Thus, $\dim (\D_y(\varphi))\leq s$ and since this holds for all $s>s(\varphi)$ we have $\dim(\D_y(\varphi))\leq s(\varphi)$.
\end{proof}

\section{Proof of the lower bound in Theorems \ref{ifs} and \ref{dense unit interval}}\label{LBS}

In order to prove the lower bound to Theorems \ref{ifs} and \ref{dense unit interval} we shall introduce the positive upper cylinder density condition. The condition essentially says that there is a sequence of arbitrarily small balls, surrounding a point $y\in [0,1]$, such that each ball contains a collection of disjoint cylinder sets who's total length is comparable to the diameter of the ball. As we shall see, given any countable Markov map $T$ with repeller $\Lambda$ this condition is satisfied for all $y\in \Lambda$, and if $\overline{\Lambda}=\Lambda$, this condition is satisfied for all $y\in [0,1]$. The substance of the proof lies in showing that for any point $y\in [0,1]$, for which the positive upper cylinder density condition is satisfied, we have $\dim \D_y(\varphi)\geq s(\varphi)$.

\begin{defn}[Positive upper cylinder density]\label{PUCD} Suppose we have an expanding Markov map with a corresponding iterated function system $\left\lbrace \phi_i\right\rbrace_{i\in\A}$. Given $y \in \overline{\Lambda}$, $n\in \N$ and $r>0$ we define,
\begin{equation*}
C(y,n,r):=\left\lbrace \phi_{\tau}([0,1]):\tau \in \A^{n}, \phi_{\tau}([0,1])\subset B(y,r)\right\rbrace.
\end{equation*}
We shall say that the iterated function system $\left\lbrace \phi_i\right\rbrace_{i\in\A}$ has positive upper cylinder density at $y$ if there is a family of natural numbers $(\lambda_r)_{r \in \R_+}$ with $\lim_{r \rightarrow 0}\lambda_r=\infty$ and $\limsup_{r\rightarrow 0}\lambda_r^{-1}\log r<0$, for which
\begin{equation*}
\limsup_{r\rightarrow 0} r^{-1}\sum_{A\in C(y,\lambda_r,r)}\diam(A)>0. 
\end{equation*}
\end{defn}

\begin{prop}\label{Technical Lemma} Let $T$ be an expanding Markov map with associated iterated function system $\left\lbrace \phi_i\right\rbrace_{i\in\A}$. Suppose that $\left\lbrace \phi_i\right\rbrace_{i\in\A}$ has positive upper cylinder density at $y \in \overline{\Lambda}$. Then for each non-negative potential $\varphi: \Lambda \rightarrow \R$ which satisfies the tempered distortion condition we have $\dim \D_y(\varphi)\geq s(\varphi)$.
\end{prop}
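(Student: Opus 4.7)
The goal is to prove $\dim \D_y(\varphi) \geq s(\varphi)$ by constructing a Cantor-like subset $K \subseteq \D_y(\varphi)$ together with a probability measure $\mu$ on $K$ satisfying a Frostman-type estimate $\mu(B(x,r)) \leq C r^{s'}$, and then invoking the mass distribution principle. Fix $s < s(\varphi)$ arbitrarily. By definition $P(-s(\psi+\varphi)) > 0$, and Theorem \ref{Finite Approx prop} yields a finite subalphabet $\F \subseteq \A$ with $P_{\F}(-s(\psi+\varphi)) > 0$. Since $\psi + \varphi$ is bounded on $\F^{\N}$ (as $\F$ is finite and $\var_1(\psi), \var_1(\varphi) < \infty$), the function $t \mapsto P_{\F}(-t(\psi+\varphi))$ is continuous and strictly decreasing past its root, so I can pick $s' \in (s, s(\varphi))$ with $P_{\F}(-s'(\psi+\varphi)) = 0$ and let $\mu_{\F}$ be the associated Gibbs measure on $\F^{\N}$, so that $\mu_{\F}([\omega]) \asymp \exp(-s' S_n(\psi+\varphi)(\omega))$ for every $\omega \in \F^n$.

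\textbf{The Cantor set.} Using positive upper cylinder density at $y$, I would select a strictly decreasing sequence $r_k \downarrow 0$ and levels $\lambda_k := \lambda_{r_k}$ with $|\log r_k| \geq c_0 \lambda_k$ for some $c_0 > 0$, such that $C_k := C(y, \lambda_k, r_k)$ satisfies $\sum_{A \in C_k}\diam(A) \geq c r_k$. Choose a rapidly growing sequence of gap lengths $n_k \to \infty$ (to be fixed below) and set
\begin{equation*}
K := \Bigl\{ \pi\bigl(\omega^{(1)} \tau^{(1)} \omega^{(2)} \tau^{(2)} \cdots\bigr) : \omega^{(k)} \in \F^{n_k},\; \phi_{\tau^{(k)}}([0,1]) \in C_k \Bigr\}.
\end{equation*}
Writing $N_k := \sum_{j=1}^{k} n_j + \sum_{j=1}^{k-1} \lambda_j$ for the length of the word preceding the block $\tau^{(k)}$, any $x \in K$ satisfies $T^{N_k}(x) \in \phi_{\tau^{(k)}}([0,1]) \subseteq B(y, r_k)$. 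The containment $K \subseteq \D_y(\varphi)$ then reduces to $r_k \leq \exp(-S_{N_k}\varphi(x))$. Since $\varphi$ is bounded on $\F^{\N}$ and tempered distortion controls its variation along the intervening $\tau^{(j)}$-blocks, one can bound $S_{N_k}\varphi(x) \leq C' N_k$ uniformly on $K$, so the parameters $n_k$ must be chosen so that $N_k \leq (C')^{-1}|\log r_k|$, a constraint compatible with $n_k \to \infty$ thanks to $\lambda_k \leq c_0^{-1}|\log r_k|$.

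\textbf{The measure and Frostman estimate.} I would define $\mu$ as the product measure on $K$ in which the successive blocks are independent, the block $\omega^{(k)}$ is distributed according to $\mu_{\F}$ conditioned on $\F^{n_k}$, and the block $\tau^{(k)}$ is selected from $C_k$ with weights proportional to $\diam(\phi_{\tau^{(k)}}([0,1]))^{s'}$ (the total-diameter bound $\sum_{A \in C_k}\diam(A) \geq c r_k$ supplies the required normalisation together with the Gibbs estimates for $\mu_{\F}$). For a basic $K$-cylinder corresponding to the prefix $\omega^{(1)}\tau^{(1)}\cdots\omega^{(k)}\tau^{(k)}$, the Gibbs property of $\mu_{\F}$, tempered distortion of $\psi + \varphi$, and the Frostman-type weighting on $\tau^{(k)}$ combine to give $\mu(\cdot) \lesssim (\diam \cdot)^{s'}$, with constants bounded uniformly in $k$. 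To bound $\mu(B(x,r))$ for a general small $r$, one brackets $r$ between consecutive natural scales of $K$; the open set condition together with the disjointness of the return cylinders in $C_k$ limits the number of next-level subcylinders meeting $B(x,r)$ by a constant depending only on $T$. The mass distribution principle then gives $\dim K \geq s'$, and since $s < s' < s(\varphi)$ was arbitrary, $\dim \D_y(\varphi) \geq s(\varphi)$.

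\textbf{Main obstacle.} The delicate point is reconciling the shrinking-target constraint $r_k \leq e^{-S_{N_k}\varphi(x)}$, which caps the total length $N_k$ in terms of $|\log r_k|$, with the dimension-preservation constraint, which demands that the return blocks $\tau^{(k)}$ do not contract cylinders faster than the $s'$-Gibbs weights can absorb. The quantitative hypothesis $\limsup_{r \to 0}\lambda_r^{-1}\log r < 0$ built into positive upper cylinder density is precisely what keeps $\lambda_k$ small enough relative to $|\log r_k|$ for both constraints to be simultaneously feasible, while the sum-of-diameters bound $\sum_{A \in C_k}\diam(A) \geq c r_k$ is what allows the Frostman weights on $C_k$ to be normalised without paying an unfavourable $|C_k|^{1-s'}$ factor. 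A secondary technicality is carrying out the Frostman estimate uniformly across the concatenation of $\F$-blocks and the possibly wild infinite-alphabet blocks $\tau^{(k)}$, which is handled block-by-block by appealing to tempered distortion.
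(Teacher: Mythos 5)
Your high-level plan — a Cantor-like subset built from alternating blocks drawn from a finite subalphabet and return blocks chosen from $C(y,\lambda_{r_k},r_k)$, carrying a product-type measure, concluded via the mass distribution principle — is the same shape as the paper's argument. However, there is a genuine gap in the way you try to make the construction land inside $\D_y(\varphi)$.

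You assert that ``$\varphi$ is bounded on $\F^{\N}$ and tempered distortion controls its variation along the intervening $\tau^{(j)}$-blocks, one can bound $S_{N_k}\varphi(x) \leq C' N_k$ uniformly on $K$.'' This is false. Tempered distortion controls the \emph{oscillation} of $S_n(\varphi)$ over a single $n$-cylinder, not its magnitude, and the return blocks $\tau^{(j)}$ range over the full countable alphabet, where $\varphi$ (and $\psi$) are unbounded; a single $\tau^{(j)}$-block of fixed length can contribute an arbitrarily large amount to the Birkhoff sum. Consequently there is no uniform linear bound $S_{N_k}\varphi \leq C'N_k$ on $K$, and your claim that the constraint $N_k \leq (C')^{-1}|\log r_k|$ is ``compatible with $n_k\to\infty$'' does not hold as stated: the two constraints you correctly identify — $r_k \leq e^{-S_{N_k}\varphi}$ for the target containment and $r_k$ not much smaller than $e^{-S_{N_k}\varphi}$ for the Frostman estimate — can only be reconciled by choosing the next $\F$-block length \emph{adaptively}, so long that the contribution $\max\{S_{\gamma_{q-1}}(\psi+\varphi)(\tau):\tau\in\Gamma_{q-1}\}$ from all previous (finitely many) tree words becomes negligible by comparison. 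This is exactly the role of the quantities $\alpha_q$ and $\beta_q$ in the paper's construction, and you have no analogue of that mechanism.

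Two secondary points you gloss over would also need real work. First, the bound on the number of next-level cylinders that a ball $B(x,r)$ can meet is not supplied by the open set condition alone; in the paper one first thins $\F$ to a set $\B$ (dropping alternating words and the two extremal ones) precisely to force a gap, of size comparable to the cylinder, between any two admissible cylinders, and it is this separation (inequality~(\ref{D Distance Equation}) in the text) that drives the Frostman estimate at the $\F$-scales. Second, for a ball straddling a $\tau^{(k)}$-cylinder boundary, there are ``partial'' cylinders (the paper's set $\C$) that require a recursive descent into the next generation to control; this is a genuine case analysis, not a uniform constant. Neither point is fatal to your overall approach, but the $S_{N_k}\varphi\leq C'N_k$ claim is, and fixing it leads you essentially back to the paper's adaptive choice of block lengths.
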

Combining Proposition \ref{Technical Lemma} with Lemmas \ref{ifs lemma} and \ref{dense unit interval lemma} completes the proof of the lower bound in Theorems \ref{ifs} and \ref{dense unit interval}, respectively.

\begin{lemma}\label{ifs lemma}
Let $T$ be an expanding Markov map. Then the corresponding iterated function system $\{\phi_i\}_{i \in \A}$ has positive upper cylinder density at every $y \in \Lambda$.
\end{lemma}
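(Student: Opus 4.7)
The plan is to exploit the fact that every $y\in\Lambda$ carries a symbolic coding, and to witness the positive upper cylinder density at $y$ using just the single nested sequence of cylinders determined by that coding. Since $y\in\Lambda$ forces $T^k(y)\in\bigcup_{i\in\A}V_i$ for every $k\in\N$ and the sets $V_i$ are pairwise disjoint, there is a unique $\omega\in\A^{\N}$ with $y\in V_{\omega|n}=\phi_{\omega|n}([0,1])$ for every $n$. Write $d_n:=\diam\phi_{\omega|n}([0,1])$. By Definition~\ref{EMR def}(2) we have $d_n\leq\xi^{-n}$ for all $n\geq N$, so $d_n\to 0$ exponentially; moreover $(d_n)_{n\in\N}$ is strictly decreasing, since each extra symbol restricts $\phi_{\omega|(n-1)}([0,1])$ to $\phi_{\omega|(n-1)}(\overline{V_i})$ for some $\overline{V_i}\subsetneq[0,1]$ (strict because $\#\A\geq 2$ and the $V_i$ are disjoint subintervals with nonempty interiors).

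For each sufficiently small $r>0$ I would set $\lambda_r:=\min\{n\in\N : d_n\leq r/2\}$. Since $d_n\to 0$, $\lambda_r$ is well-defined, and $\lambda_r\to\infty$ as $r\to 0$. The geometric bound $d_n\leq\xi^{-n}$ for $n\geq N$ gives $\lambda_r\leq\log(2/r)/\log\xi$ for all sufficiently small $r$, whence
\[
\limsup_{r\to 0}\lambda_r^{-1}\log r\;\leq\;-\log\xi\;<\;0.
\]
Because the closed cylinder $\phi_{\omega|\lambda_r}([0,1])$ contains $y$ and has diameter at most $r/2$, every one of its points lies within distance $r/2<r$ of $y$, so $\phi_{\omega|\lambda_r}([0,1])\subset B(y,r)$ and hence $\phi_{\omega|\lambda_r}([0,1])\in C(y,\lambda_r,r)$.

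For the density itself, I would test along the subsequence $r_n:=2d_n\to 0$. Strict monotonicity of $(d_n)$ gives $d_{n-1}>d_n=r_n/2$, so $\lambda_{r_n}=n$, and $\phi_{\omega|n}([0,1])\in C(y,\lambda_{r_n},r_n)$ has diameter exactly $r_n/2$. Therefore
\[
\limsup_{r\to 0}r^{-1}\sum_{A\in C(y,\lambda_r,r)}\diam(A)\;\geq\;\limsup_{n\to\infty}r_n^{-1}\,d_n\;=\;\tfrac{1}{2}\;>\;0,
\]
which verifies Definition~\ref{PUCD}. The argument reduces to manipulating a single nested family of cylinders and presents no substantial obstacle; the only piece of bookkeeping is ensuring the closed cylinder fits inside the open ball $B(y,r)$, which the factor $1/2$ in the definition of $\lambda_r$ handles painlessly.
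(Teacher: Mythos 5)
Your proof is correct and follows essentially the same route as the paper's: code $y$ by a symbolic sequence $\omega$, set $\lambda_r$ to be the first $n$ with $2\diam\phi_{\omega|n}([0,1])\leq r$, derive the decay bound on $\lambda_r^{-1}\log r$ from Definition~\ref{EMR def}(2), and test the density along $r_n=2\diam\phi_{\omega|n}([0,1])$ to obtain $\limsup\geq\tfrac12$. The only difference is that you supply the (correct and slightly more careful) justification that $(\diam\phi_{\omega|n}([0,1]))_n$ is strictly decreasing to conclude $\lambda_{r_n}=n$, a step the paper leaves implicit.
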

\begin{proof}
Suppose that $y \in \Lambda$. Then there exists some $\omega \in \Sigma$ such that $y \in \phi_{\omega|n}([0,1])$ for all $n \in \N$. We shall define $(\lambda_r)_{r\in \R_+}$ by
\begin{equation*}
\lambda_r:=\min\left\lbrace n\in \N: 2\diam\left(\phi_{\omega|n}([0,1])\right)\leq r\right\rbrace.
\end{equation*}
Clearly $\lim_{r\rightarrow 0}\lambda_r=\infty$. Moreover,
\begin{equation*}
r < 2\diam\left(\phi_{\omega|\lambda_r-1}([0,1])\right)\leq 2\zeta^{-\lambda_r+1},
\end{equation*}
so $\limsup_{r\rightarrow \infty}\lambda_r^{-1}\log r\leq -\log \xi<0$.

Given any $n \in \N$ choose $r_n:=2\diam\left(\phi_{\omega|n}([0,1])\right)$. Clearly $\lambda_{r_n}=n$ and $\phi_{\omega|n}([0,1])\in C(y,n,r_n)$. Hence, 
\begin{equation*}
\limsup_{r\rightarrow 0} r^{-1}\sum_{A\in C(y,\lambda_r,r)}\diam(A)\geq \frac{1}{2}. 
\end{equation*}
\end{proof}

\begin{lemma}\label{dense unit interval lemma}
Suppose $T$ is an expanding Markov map with $\overline{\Lambda}=[0,1]$. Then the corresponding iterated function system $\{\phi_i\}_{i \in \A}$ has positive upper cylinder density at every $y \in [0,1]$.
\end{lemma}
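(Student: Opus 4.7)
The strategy is to extend the construction of Lemma \ref{ifs lemma} to points $y\in[0,1]$ that are only approximated by $\Lambda$. The hypothesis $\overline{\Lambda}=[0,1]$ gives a sequence $z_k\in\Lambda$ with $z_k\to y$; writing $z_k=\pi(\omega^{(k)})$ and $d_k:=|y-z_k|$, I would work with the nested cylinders $\phi_{\omega^{(k)}|n}([0,1])$ and the auxiliary scales $r_n^{(k)}:=2\diam\phi_{\omega^{(k)}|n}([0,1])$ which served as the good scales for $z_k$ in Lemma \ref{ifs lemma}.

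For each $k$, let $n_k$ be the largest $n$ with $r_n^{(k)}\geq 2d_k$. Then $\phi_{\omega^{(k)}|n_k}([0,1])$ has diameter $r_{n_k}^{(k)}/2\geq d_k$, contains $z_k$, and so is contained in $B(z_k,r_{n_k}^{(k)}/2)\subset B(y,R_k)$ where $R_k:=r_{n_k}^{(k)}/2+d_k$. This single cylinder lies in $C(y,n_k,R_k)$ and contributes $r_{n_k}^{(k)}/2$ to the cylinder sum at scale $R_k$, giving
\begin{equation*}
R_k^{-1}\sum_{A\in C(y,n_k,R_k)}\diam(A)\;\geq\;\frac{r_{n_k}^{(k)}/2}{r_{n_k}^{(k)}/2+d_k}\;\geq\;\tfrac{1}{2},
\end{equation*}
since $r_{n_k}^{(k)}/2\geq d_k$. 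The family $\lambda_r$ is then defined by $\lambda_{R_k}:=n_k$ at the sample scales and by monotone interpolation elsewhere; the growth conditions $\lim_{r\to 0}\lambda_r=\infty$ and $\limsup_{r\to 0}\lambda_r^{-1}\log r<0$ follow from the exponential bound $\diam\phi_{\omega^{(k)}|n}([0,1])\leq\xi^{-n}$ of Definition \ref{EMR def}(2).

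The delicate point is ensuring that the sample scales $R_k$ tend to zero, i.e.\ that $r_{n_k}^{(k)}\to 0$. This can fail for one fixed sequence $z_k$ whose cylinder diameters exhibit a very large scale drop at a low level, keeping $n_k$ bounded while $d_k\to 0$. Resolving this is the main obstacle of the proof, and is where the hypothesis $\overline{\Lambda}=[0,1]$ enters essentially: rather than fixing one approximating sequence, I would let the choice of approximant depend on the scale and select $z_k\in\Lambda\cap B(y,d_k)$ so that the cylinder $\phi_{\omega^{(k)}|n_k}([0,1])$ has diameter comparable to $d_k$. Because $\Lambda$ is dense in the whole of $[0,1]$, for every small target $r>0$ the cylinders $\phi_\tau([0,1])$ passing near $y$ realise diameters at all scales down to $0$, and a pigeonhole or diagonal extraction then produces an infinite subsequence of good scales $R_k\to 0$ along which the cylinder sum ratio remains at least $1/2$, which is all that is required to verify the positive upper cylinder density condition.
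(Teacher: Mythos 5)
Your proposal correctly identifies the obstacle—the cylinder containing a nearby $z_k\in\Lambda$ at the scale where its diameter first drops below $2d_k$ may be vastly smaller than $d_k$, so the ``sample scales'' $R_k$ may not tend to zero—but the closing gesture toward ``a pigeonhole or diagonal extraction'' along a cleverer choice of approximants does not repair this, and indeed no choice of approximants can. Consider the Gauss map and $y=0$: every $z$ near $0$ lies in a first-level cylinder $V_n=(\tfrac{1}{n+1},\tfrac{1}{n})$ of diameter $\approx z^2$, so the diameters of cylinders containing $z$ jump from $1$ (level $0$) to $\approx z^2$ (level $1$) with nothing in between; there simply is no cylinder containing $z$ of diameter comparable to $d_k\approx z$. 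A countable IFS has no lower bound on contraction ratios, so the strategy of producing a \emph{single} cylinder of diameter $\gtrsim r$ inside $B(y,r)$ is doomed, and that is the genuine missing idea in your proof.

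The paper's argument is structurally different and avoids this entirely. It fixes the level $\lambda_r:=\lceil(\log 2 - \log r)/\log\xi\rceil$ uniformly in $y$, so that by Definition~\ref{EMR def}(2) \emph{every} level-$\lambda_r$ cylinder has diameter $<r/2$. Density $\overline{\Lambda}=[0,1]$ is then used, via (\ref{union interval}), to show that the collection of level-$\lambda_r$ cylinders meeting $[y,y+r)$ has total length $\geq r$. Since each such cylinder has diameter $<r/2$, all but at most one (the one straddling $y+r$) lie entirely inside $B(y,r)$, so $\sum_{A\in C(y,\lambda_r,r)}\diam(A)\geq r/2$ for \emph{every} small $r$, not merely along a subsequence. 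The point you should take away: the positive upper cylinder density condition is a statement about the aggregate length of many tiny cylinders filling a ball, not about the existence of one large cylinder, and it is exactly this aggregation that makes the hypothesis $\overline{\Lambda}=[0,1]$ usable.
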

\begin{proof}
Suppose $T$ satisfies $\overline{\Lambda}=[0,1]$. Then for any $n \in \N$ we have
\begin{equation}\label{union interval}
[0,1] \subseteq \overline{\Lambda} \subseteq \overline{\Lambda} \subseteq \overline{\bigcup_{\omega \in \A^n}\phi_{\omega}(\Lambda)}\subseteq \overline{\bigcup_{\omega \in \A^n} \phi_{\omega}([0,1])}.
\end{equation}
We define $(\lambda_r)_{r \in \R_+}$ by
\begin{eqnarray*}
\lambda_r:=\left\lceil \frac{-\log r+ \log 2}{\log \xi} \right\rceil .
\end{eqnarray*}
Clearly $\lim_{r \rightarrow 0}\lambda_r=\infty$ and $ \limsup_{r\rightarrow 0} \lambda_r^{-1}\log r =-\log \xi<0$. 

Suppose $y \in [0, \frac{1}{2}]$. Given any $r< \frac{1}{2}$ and any $\omega \in \A^{\lambda_r}$ we have 
\begin{eqnarray}\label{small intervals}
\diam \left(\phi_{\omega}([0,1])\right) \leq \xi^{-\lambda_r}< r/2. 
\end{eqnarray}
Now $C(y,n,r)$ contains all but the right most member of 
\begin{eqnarray*}
\mathcal{I}:=\left\lbrace \phi_{\omega}([0,1]): \phi_{\omega}([0,1])\cap [y,y+r)\neq \emptyset\right\rbrace, 
\end{eqnarray*} 
if such a member exists. By (\ref{union interval}) $\sum_{A \in \mathcal{I}}\diam(A) \geq r$, so by (\ref{small intervals}) we have,
\begin{equation}\label{C equation}
\sum_{A\in C(y,\lambda_r,r)}\diam(A)\geq r/2. 
\end{equation}
By symmetry \ref{C equation} also holds for $y\in [\frac{1}{2},1]$.

Letting $r \rightarrow 0$ proves the lemma.
\end{proof}

Before going into details we shall give a brief outline of the proof of Proposition \ref{Technical Lemma}. We begin by taking $s<s(\varphi)$ and extracting a certain finite set of words $\B$ such that $P_{\B}(-s(\phi+\varphi))>0$. In addition, we take a Bernoulli measure $\mu$ supported on $\B^{\N}$ with $h(\mu)=t\int(\phi+\varphi)d\mu$ for some $t>s$. We then construct a tree structure, iteratively, in the following way. Let $\Gamma_{q-1}$ be the finite collection of words in the tree at stage $q-1$ and $\gamma_{q-1}$ denote the length of those words. At stage $q$ we take $\alpha_q$ so large that $\alpha_q^{-1}\max\left\lbrace S_{\gamma_{q-1}}(\psi)(\omega), S_{\gamma_{q-1}}(\varphi)(\omega):\omega\in \Gamma_q\right\rbrace$ is negligible. We then take a ball of radius $B(y,r_q)$ so that $r_q<\exp(-\alpha_q\int \varphi d\mu)$ and $B(y,r_q)$ contains a collection of disjoint cylinder sets who's total width is comparable to $r_q$, corresponding to a finite collection of words $\RR_q$ of length $\lambda_q$. This is made possible by the upper cylinder density condition. We then choose $\beta_q$ so that $\exp(-\beta_q\int \varphi d\mu)$ is greater than, but comparable with, $r_q$. $\Gamma_q$ consists of all continuations of $\Gamma_{q-1}$ of length $\gamma_q:=\beta_q+\lambda_q$ so that  $\beta_q|\omega \in \RR_q$ and $\omega_{\nu}$ is chosen freely from $\B$ for all $\gamma_{q-1}<\nu\leq \beta_q$. Having constructed our tree we shall define $S$ to be a certain subset of its limit points for which $\omega|\beta_q$ behaves ``typically'' with respect to $\mu$ for each $q$. Given $\omega \in S$ we have $S_{\beta_q}(\varphi)(\pi(\omega))\approx \beta_q \int \varphi d\mu<-\log r_q$ so $\beta_q|\omega|\gamma_q\in \RR_q$ implies $|T^{\beta_q}(\pi(\omega))-y|<\exp(-S_{\beta_q}(\varphi)(\pi(\omega)))$. Hence $\pi(S)\subset \D_y(\varphi)$. At each stage $\beta_q$, $S$ consists of approximately $\beta_qh(\mu)$ intervals of diameter approximately $\exp(-\beta_q \int \psi d\mu)$. Moreover, for all $\omega\in S$, $\beta_q|\omega|\gamma_q\in \RR_q$. The total diameter of cylinders corresponding to words from $\RR_q$ is about $r_q \approx \exp(-\beta_q\int \varphi d\mu)$, and so at stage $\gamma_q$ $S$ consists of approximately $\beta_q h(\mu)$ intervals of diameter roughly $\exp(-\beta_q\int (\psi+\varphi) d\mu)$, giving an optimal covering exponent of $t>s$. The fact that $\beta_q \geq \alpha_q$ will be shown to imply that we cannot obtain a cover which is more efficient, and as such $\dim \pi(S)\geq t$.

\begin{proof}[Proof of Proposition \ref{Technical Lemma}]
Choose $s<s(\varphi)$ so that $P(-s(\phi+\varphi))>0$. Without loss of generality we may assume that $s>0$. Now take $\epsilon \in (0,P(-s(\phi+\varphi)))$. Since $\lim_{n \rightarrow \infty}\rho_n =0$, it follows from the definition of pressure that for all sufficiently large $n$ we have,
\begin{equation}
\sum_{\omega \in \A^n} \exp(S_n(-s(\psi+\varphi))(\omega)) > e^{\epsilon n +2 n s \rho_n}.
\end{equation}
Consequently, for all sufficiently large $n$ we have,
\begin{equation}
\sum_{\tau \in\A^n}e^{-s\left(S_n(\psi)(\tau)+S_n(\phi)(\tau)\right)} > e^{\epsilon n}.
\end{equation}
By choosing some large $k$ we obtain,
\begin{equation}
\sum_{\tau \in\A^k}e^{-s\left(S_k(\psi)(\tau)+S_k(\phi)(\tau)\right)}> 6.
\end{equation}
Thus, there exists some finite subset $\F\subseteq \A^k$ with
\begin{equation}
\sum_{\tau \in\F}e^{-s\left(S_k(\psi)(\tau)+S_k(\phi)(\tau)\right)}> 6.
\end{equation}
Note that $s>0$ and for each $\tau \in \F$, $S_k(\psi)(\tau)>0$ and $S_k(\varphi)(\tau)>0$, so $e^{-s\left(S_k(\psi)(\tau)+S_k(\phi)(\tau)\right)}\in (0,1)$ for every $\tau \in \F$.

The finite set $\F$ inherits an order $<_*$ from the order on $[0,1]$ in a natural way by $\tau_1<_* \tau_2$ if and only if $\sup \phi_{\tau_1}([0,1])\leq \inf \phi_{\tau_2}([0,1])$. Partition $\F$ into two disjoint sets $\F_1$ and $\F_2$ so that if $\tau \in \F_1$ then its succesor under $<_*$ is in $\F_2$ and if $\tau \in \F_2$ then its succesor under $<_*$ is in $\F_1$. Clearly we may choose one $m \in \{1,2\}$ so that
\begin{equation}
\sum_{\tau \in \F_{m}}e^{-s\left(S_k(\psi)(\tau)+S_k(\phi)(\tau)\right)}\geq \frac{1}{2} \sum_{\tau \in\F}e^{-s\left(S_k(\psi)(\tau)+S_k(\phi)(\tau)\right)} > 3.
\end{equation}
Since $s>0$, $S_k(\psi)(\tau)>0$ and $S_k(\varphi)(\tau)\geq 0$,  $e^{-s\left(S_k(\psi)(\tau)+S_k(\phi)(\tau)\right)}<1$ for every $\tau \in \F$. Thus we may remove both the smallest and the largest element from $\F_m$, under the order $<_*$, to obtain a set $\B\subset \F_m$ satisfying
\begin{equation}
\sum_{\tau \in \B}e^{-s\left(S_k(\psi)(\tau)+S_k(\phi)(\tau)\right)}> 1.
\end{equation}
Let $c:= \max \left\lbrace S_k(\psi)(\tau)+S_k(\varphi)(\tau):\tau \in \F \right\rbrace>0$. Given any $\omega_1, \omega_2\in \A^n$ and $\tau_1, \tau_2\in \B$ with either $\omega_1\neq \omega_2$ or $\tau_1\neq \tau_2$, or both, we have,
\begin{eqnarray}\label{D Distance Equation} |x-y| \geq \max \left\lbrace e^{-S_n(\psi)(\omega_1)-c}, e^{-S_n(\psi)(\omega_2)-c}\right\rbrace
\end{eqnarray}
for all $x\in (\phi_{\omega_1}\circ \phi_{\tau_1})([0,1])$ and $y\in (\phi_{\omega_1}\circ \phi_{\tau_1})([0,1])$. When $\omega_1 \neq \omega_2$ this follows from the fact that $\B$ contains neither the maximal nor the minimal element of $\F$ under $<_*$. When $\omega_1=\omega_2$ but $\tau_1 \neq \tau_2$ this follows from the fact that since $\tau_1, \tau_2 \in \B \subset \F_m$, $\tau_1$ cannot be the successor of $\tau_2$ and $\tau_2$ cannot be the successor of $\tau_1$.

Since $\B$ is finite and for each $\omega \in \Sigma$ $S_k(\psi)(\omega) \geq k \log \xi$ and $S_k(\psi)(\omega) \geq 0$, we may take $t\in (s,1)$ satisfying
\begin{equation}
\sum_{\tau \in\B}e^{-t\left(S_k(\psi)(\tau)+S_k(\phi)(\tau)\right)}= 1.
\end{equation}

We define a $k$-th level Bernoulli measure $\mu$ on $\B^{\N}$ by defining $p(\tau)$ for $\tau \in \A^k$ by $p(\tau):=e^{-t\left(S_k(\psi)(\tau)+S_k(\phi)(\tau)\right)}$ and setting $\mu\left([\tau_1, \cdots, \tau_n]\right)= p_{\tau_1}\cdots p_{\tau_n}$ for each $(\tau_1, \cdots, \tau_n) \in \B^{n}$. We define,
\begin{eqnarray*}
\Exp(S_k(\psi))&:=& \sum_{\tau \in \B}p(\tau)S_k(\psi)(\tau)\\
\Exp(S_k(\varphi))&:=& \sum_{\tau \in \B}p(\tau)S_k(\varphi)(\tau).
\end{eqnarray*}
Choose a decreasing sequence $\{\delta_q\}_{q \in \N} \subset \R_{>0}$ so that $\prod_{q \in \N}\left(1-\delta_q\right)>0$.
Take $q \in \N$. By Kolmogorov's strong law of large numbers combined with Egorov's theorem there exists set $S_q \subseteq \B^{\N}$ with $\mu(S_q)>1-\delta_q$ and $N(q)\in \N$ such that for all $\omega =(\omega_{\nu})_{\nu \in \N} \in S_q$ with $\omega_{\nu} \in \B$ for each $\nu \in \N$ and all $n\geq N(q)$ we have,
\begin{eqnarray}\label{KLimits1}
\frac{1}{n}\sum_{\nu=1}^{n}S_k(\psi)(\omega_{\nu}) &<& \Exp(S_k(\psi))+\frac{1}{q}\\ \label{KLimits2}
 \frac{1}{n}\sum_{\nu=1}^{n}S_k(\varphi)(\omega_{\nu}) &<& \Exp(S_k(\varphi))+\frac{1}{q}\\ \label{KLimits3}
 \frac{1}{n}\sum_{\nu=1}^{n}\log p_{\omega_{\nu}} &<& \sum_{\tau \in \B}p(\tau)\log p(\tau)+\frac{1}{q}\\
\nonumber &=& -t \left(\Exp(S_k(\psi))+\Exp(S_k(\varphi))\right)+\frac{1}{q}\\
\nonumber &<& -t \left( \frac{1}{n}\sum_{\nu=1}^{n}S_k(\psi)(\omega_{\nu})+\Exp(S_k(\varphi))\right)+ \frac{2}{q}\\
\nonumber &\leq & -t \left(\frac{1}{n}S_{nk}(\psi)(\omega_{\nu})_{\nu=1}^n+\Exp(S_k(\varphi)) \right)+ \frac{2}{q}.
\end{eqnarray}

Clearly we may assume that $(N(q))_{q \in \N}$ is increasing and $N(1) \geq 2$.

Now fix
\begin{eqnarray*}
\zeta &\in& \left(0,\limsup_{r\rightarrow 0}r^{-1}\sum_{A\in C(y,\lambda_r,r)}\diam(A)\right),\\
d &\in& \left(\limsup_{r\rightarrow 0}\lambda_r^{-1}\log r,0\right).
\end{eqnarray*}

We shall now give an inductive construction consisting of a quadruple of rapidly increasing sequences of natural numbers $(\alpha_q)_{q \in \N \cup \{0\}}$, $(\beta_q)_{q \in \N\cup \{0\}}$, $(\gamma_q)_{q \in \N\cup \{0\}}$, $(\lambda_q)_{q \in \N \cup \{0\}}$,  a sequence of positive real numbers $(r_q)_{q \in \N \cup \{0\}}$ and a pair of sequences of finite sets of words $\left(\RR_q\right)_{q \in \N\cup \{0\}}$ and $\left(\Gamma_q\right)_{q \in \N\cup \{0\}}$. First set $\alpha_0=\beta_0=\gamma_0=0$, $\lambda_0=1$ and $\Lambda_0=\Gamma_0 =\emptyset$. For each $q \in \N$ we define 
\begin{eqnarray*}
\alpha_q:= 10k  q^2 \gamma_{q-1} N(q)N(q+1) \bigg\lceil \log \zeta^{-1} c(3+2\rho_{\lambda_{q-1}}) \max \left\lbrace S_{\gamma_{q-1}}(\psi)(\tau)+S_{\gamma_{q-1}}(\varphi)(\tau) : \tau \in \Gamma_{q-1} \right\rbrace  \bigg\rceil.
\end{eqnarray*}
Note that since $\Gamma_{q-1}$ is finite $\alpha_q$ is well defined. 

We then choose $r_q>0$ so that,
\begin{equation}\label{r q def}
-\log r_q > k^{-1}(\alpha_q-\gamma_{q-1}) \left(\Exp(S_k(\varphi))+\frac{1}{q} \right)+\gamma_{q-1}c+q,
\end{equation}
and also 
\begin{equation*}
\sum_{A\in C(y,\lambda_{r_q},r_q)}\diam(A)>\zeta r_q
\end{equation*}
and $\lambda_r^{-1}\log r<d$.

Let $\lambda_q:= \lambda_{r_q}$. We may choose $\RR_q$ to be a finite set of words $\tau \in \A^{\lambda_q}$ so that for each $\tau \in \RR_q$ $\phi_{\tau}([0,1]) \subset B(y,r_q)$ and 
\begin{equation*}
 \sum_{\tau \in \RR_q} \diam \left( \phi_{\tau}([0,1])\right)> \zeta r_q.
 \end{equation*}

Let $\beta_q$ be the largest integer satisfying $k|(\beta_q- \gamma_{q-1})$ and 
\begin{equation}
-\log r_q > k^{-1}(\beta_q-\gamma_{q-1}) \left(\Exp(S_k(\varphi))+\frac{1}{q} \right)+\gamma_{q-1}c+q.
\end{equation}
We let $\gamma_q:= \beta_q+\lambda_q$. We define $\Gamma_q$ by,
\begin{equation*}
\Gamma_q:= \left\lbrace \omega \in \A^{\gamma_q}: \omega|\gamma_{q-1} \in \Gamma_{q-1}, \gamma_{q-1}|\omega|\beta_q \in \B^{k^{-1}(\beta_q- \gamma_{q-1})}, \beta_q|\omega|\gamma_q \in \RR_q \right\rbrace.
\end{equation*}
Note that since $\B$, $\Gamma_{q-1}$ and $\RR_q$ are finite, so is $\Gamma_q$.

We inductively define a sequence of measures $\W_q$ supported on $\Gamma_q$.

For each $\omega \in \A^n$ and $\tau \in \RR_q$ we let
\begin{equation*}
q\left(\omega,\tau\right):=\frac{ \diam \left( \phi_{\omega}\circ \phi_{\tau}([0,1])\right)}{\sum_{\tau \in \RR_q} \diam \left( \phi_{\omega}\circ \phi_{\tau}([0,1])\right)}.
\end{equation*}
Now by the definition of $\Gamma_q$, each $\omega^q \in \Gamma_q$ is of the form $\omega^q=(\omega^{q-1},\kappa_1^q, \cdots, \kappa_{k^{-1}(\beta_q- \gamma_{q-1})}, \tau_q)$ where $\omega^{q-1} \in \Gamma_{q-1}$, $\kappa_{\nu}^q \in \B$ for $\nu=1, \cdots,k^{-1}(\beta_q- \gamma_{q-1})$ and $\tau_q \in \RR_q$. We set,
\begin{equation*}
\W_q(\omega^q)= \W_{q-1}([\omega^{q-1}]) \left(\prod_{\nu=1}^{k^{-1}(\beta_q- \gamma_{q-1})}p(\kappa_{\nu})\right)q\left((\omega^{q-1},\kappa_1^q, \cdots, \kappa_{k^{-1}(\beta_q- \gamma_{q-1})}),\tau_q\right)
\end{equation*}  
Define $\Gamma:= \left\lbrace \omega \in \Sigma: \omega| \gamma_q \in \Gamma_q \text{ for all }q\in \N \right\rbrace$ and extend the sequence $(\W_q)_{q \in \N}$ to a measure $\W$ on $\Gamma$ in the natural way.

We let $S \subseteq \Gamma$ denote the subset,
\begin{equation}
S := \left\lbrace \omega \in \Gamma : [ \gamma_{q-1}|\omega|\beta_q ] \cap S_q \neq \emptyset \text{ for all } q \in \N \right\rbrace.
\end{equation}

\begin{lemma}\label{S interior} For all $\omega \in S$ and $n \in \N$ we have $\pi(\omega) \in \phi_{\omega|n}((0,1))$.
\end{lemma}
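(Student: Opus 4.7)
The plan is to reduce the claim to showing $\pi(\sigma^n\omega)\in(0,1)$: once this is established, the identity $\pi(\omega)=\phi_{\omega|n}(\pi(\sigma^n\omega))$ immediately gives $\pi(\omega)\in\phi_{\omega|n}((0,1))$, as required.

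The key observation I would use is that $\phi_\tau([0,1])\subset(0,1)$ for every $\tau\in\B$. This is essentially already invoked in justifying (\ref{D Distance Equation}). By construction, $\B$ is obtained by first passing to an alternating subset $\F_m\subset\F$ (which removes at least one of the two $<_*$-extremes of $\F$) and then discarding the $<_*$-minimum and $<_*$-maximum of $\F_m$; consequently $\B$ contains neither the $<_*$-minimum nor the $<_*$-maximum of $\F$. Hence any $\tau\in\B$ is sandwiched between some $\tau^-,\tau^+\in\F$ with $\tau^-<_*\tau<_*\tau^+$. Since $\phi_{\tau^-}([0,1])$ and $\phi_{\tau^+}([0,1])$ are nondegenerate closed subintervals of $[0,1]$, the definition of $<_*$ yields
\[
\inf\phi_\tau([0,1])\geq\sup\phi_{\tau^-}([0,1])>\inf\phi_{\tau^-}([0,1])\geq 0,
\]
and symmetrically $\sup\phi_\tau([0,1])\leq\inf\phi_{\tau^+}([0,1])<1$, giving the claim.

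With this in hand, I would exploit the tree structure of $\Gamma$ to locate a $\B$-block in $\omega$ strictly to the right of position $n$. Since $\omega\in S\subseteq\Gamma$ we have $\omega|\gamma_q\in\Gamma_q$ for every $q$, so the segment $(\omega_{\gamma_{q-1}+1},\ldots,\omega_{\beta_q})$ lies in $\B^{k^{-1}(\beta_q-\gamma_{q-1})}$. Choose $q$ so large that $\gamma_{q-1}\geq n$ and $\beta_q\geq\gamma_{q-1}+k$; both hold eventually, since $\gamma_q\to\infty$ and by construction $\beta_q-\gamma_{q-1}$ is a positive multiple of $k$ that grows at least as fast as $\alpha_q$. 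Then $\tau:=(\omega_{\gamma_{q-1}+1},\ldots,\omega_{\gamma_{q-1}+k})\in\B$, and setting $m:=\gamma_{q-1}+k$ we obtain
\[
\pi(\sigma^n\omega)\in\phi_{\omega_{n+1}\cdots\omega_m}([0,1])=\phi_{\omega_{n+1}\cdots\omega_{\gamma_{q-1}}}\bigl(\phi_\tau([0,1])\bigr)\subset\phi_{\omega_{n+1}\cdots\omega_{\gamma_{q-1}}}\bigl((0,1)\bigr).
\]
Each $\phi_i$ is a continuous injection of $[0,1]$ onto a closed subinterval of $[0,1]$ and is therefore monotone, so it sends $(0,1)$ to an open subinterval of $[0,1]$, which is automatically contained in $(0,1)$. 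Iterating this (with the convention that the empty composition, when $n=\gamma_{q-1}$, is the identity) gives $\phi_{\omega_{n+1}\cdots\omega_{\gamma_{q-1}}}((0,1))\subset(0,1)$, whence $\pi(\sigma^n\omega)\in(0,1)$, completing the proof.

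The only potentially delicate step is verifying that $\B$ excludes both $<_*$-extremes of $\F$, but this is baked into the construction of $\B$ and already noted immediately after (\ref{D Distance Equation}); the remainder of the argument is just bookkeeping about the block structure encoded in $\Gamma_q$.
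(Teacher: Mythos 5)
Your proof is correct and rests on the same key observation as the paper's: the word $\omega$ contains $\B$-blocks at arbitrarily late positions, and since $\B$ excludes both $<_*$-extremes of $\F$, the corresponding cylinder is sandwiched strictly between sibling cylinders and hence lies in $(0,1)$. The only difference is stylistic—you argue directly via $\pi(\sigma^n\omega)\in\phi_\tau([0,1])\subset(0,1)$, while the paper argues by contradiction (if $\pi(\omega)$ sat on the boundary of some level-$N$ cylinder it would sit on the boundary of every deeper one, which a $\B$-block rules out)—so this is essentially the same proof.
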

\begin{proof}
Suppose for a contradiction that $\omega \in S$ and for some $N \in \N$ $\pi(\omega) \notin \phi_{\omega|N}((0,1))$. Then for all $n \geq N$ we have $\pi(\omega) \in \phi_{\omega|n}(\{0,1\})=\partial \phi_{\omega|n}([0,1])$. However, given $N \in \N$ we may choose $q$ with $\gamma_q>N$. Then $\omega_{\gamma_q+1} \in \B$ by the construction of $S$. Consequently $\phi_{\gamma_q+1}([0,1])$ is in neither the left most, nor the right most interval amongst,
\begin{eqnarray*}
\left\lbrace \phi_{\omega|\kappa(l)}\circ \phi_{\tau}([0,1]): \tau \in \F \right\rbrace.
\end{eqnarray*}
Hence, $\pi(\omega) \notin \partial \phi_{\omega|\gamma_q}([0,1])$. 
\end{proof}

\begin{lemma}\label{subset}
$\pi(S) \subseteq \D_y(\varphi)$.
\end{lemma}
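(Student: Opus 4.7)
The plan is to fix an arbitrary $\omega \in S$ and verify the shrinking-target condition at the indices $n = \beta_q$ for every $q \in \N$; since $\beta_q \to \infty$, this will force $\pi(\omega) \in \D_y(\varphi)$.

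For the ``target-hit'' side, the construction of $\Gamma_q$ ensures $\beta_q|\omega|\gamma_q \in \RR_q$, so $T^{\beta_q}(\pi(\omega))$ lies in the compact cylinder $\phi_{\beta_q|\omega|\gamma_q}([0,1])$, which by the choice of $\RR_q$ is contained in the open ball $B(y,r_q)$. Hence $|T^{\beta_q}(\pi(\omega))-y| < r_q$, and it suffices to verify $S_{\beta_q}(\varphi)(\pi(\omega)) \leq -\log r_q$.

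To do so I split the Birkhoff sum as
\begin{equation*}
S_{\beta_q}(\varphi)(\pi(\omega)) \;=\; S_{\gamma_{q-1}}(\varphi)(\pi(\omega)) \;+\; S_{\beta_q-\gamma_{q-1}}(\varphi)\!\left(T^{\gamma_{q-1}}\pi(\omega)\right).
\end{equation*}
The first summand is controlled by tempered distortion together with the block structure of the words in $\Gamma_{q-1}$, giving a bound by the term $\gamma_{q-1}c$ appearing in (\ref{r q def}). For the second summand, the defining property of $S$ provides some $\xi \in S_q \cap [\gamma_{q-1}|\omega|\beta_q]$; since $\gamma_{q-1}|\omega|\beta_q$ is a concatenation of $k$-letter blocks from $\B$, the sup-value of $S_{\beta_q-\gamma_{q-1}}(\varphi)$ over this cylinder is dominated by $\sum_{\nu=1}^{k^{-1}(\beta_q-\gamma_{q-1})} S_k(\varphi)(\xi_\nu)$, and applying the Egorov-type bound (\ref{KLimits2}) to $\xi$ at length $k^{-1}(\beta_q-\gamma_{q-1})$ yields the upper bound $k^{-1}(\beta_q-\gamma_{q-1})\bigl(\Exp(S_k(\varphi)) + 1/q\bigr)$. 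This step requires $k^{-1}(\beta_q-\gamma_{q-1}) \geq N(q)$, which is guaranteed because the definition of $\alpha_q$ carries the factor $N(q)$ while (\ref{r q def}) forces $\beta_q$ to be at least $\alpha_q$ up to the rounding error from the divisibility $k \mid (\beta_q-\gamma_{q-1})$. Summing the two estimates and comparing with (\ref{r q def}) gives $S_{\beta_q}(\varphi)(\pi(\omega)) < -\log r_q - q < -\log r_q$, as required.

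The main technical obstacle is the tempered-distortion bookkeeping required when one passes between Birkhoff sums along the concrete orbit of $\pi(\omega)$ and the sup-over-cylinder surrogates $S_k(\varphi)(\xi_\nu)$, and when one compares the single length-$(\beta_q-\gamma_{q-1})$ cylinder with its $k$-block refinement. Each such error is of order $\var_n(S_n(\varphi)) = o(n)$, while the sizes $\alpha_q$, $\beta_q$ and $N(q)$ have been inflated deliberately so that these vanishing corrections are absorbed by the additive slack $q$ and the multiplicative slack $1/q$ that sit in the definitions of $r_q$, $\beta_q$ and $\alpha_q$.
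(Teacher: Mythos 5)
Your argument reproduces the paper's proof: hit the target at the indices $n=\beta_q$ via $\beta_q|\omega|\gamma_q\in\RR_q$, decompose the Birkhoff sum at time $\gamma_{q-1}$, bound the tail using (\ref{KLimits2}) and the witness in $S_q\cap[\gamma_{q-1}|\omega|\beta_q]$, absorb the head into the $\gamma_{q-1}c$ slack, and compare with the defining inequality for $\beta_q$. The only detail you leave tacit, which the paper isolates as Lemma \ref{S interior}, is that for $\omega\in S$ the point $\pi(\omega)$ lies in the interior of every cylinder $\phi_{\omega|n}([0,1])$, which is what licenses both the identity $T^{\beta_q}(\pi(\omega))=\pi(\sigma^{\beta_q}\omega)$ and the passage from the pointwise Birkhoff sum to the sup-over-cylinder surrogates.
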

\begin{proof} Take $\omega \in S$. By Lemma \ref{S interior} we have $\pi(\omega) \in \phi_{\omega|n}((0,1)) \subseteq V_{\omega|n}$ and hence $S_n(\varphi)(\omega) \leq S_n(\varphi)(\omega|n)$ for all $n \in \N$ and in particular for each $q \in \N$,
\begin{eqnarray*}
S_{\beta_q}(\varphi)(\omega) & \leq & S_{\beta_q}(\varphi)(\omega|\beta_q)\\
& \leq & S_{\beta_q-\gamma_{q-1}}(\varphi)(\gamma_{q-1}|\omega|\beta_q)+ c \gamma_{q-1}\\
& \leq & \sum_{\nu=1}^{k^{-1}(\beta_q-\gamma_{q-1})}S_{k}(\varphi)(\gamma_{q-1}+(\nu-1)k|\omega|\gamma_{q-1}+\nu k)+ c \gamma_{q-1}.
\end{eqnarray*}
By (\ref{KLimits2}) combined with the fact that $[\gamma_{q-1}|\omega|\beta_q]\cap S_q \neq \emptyset$,
\begin{eqnarray*}
S_{\beta_q}(\varphi)(\omega) & \leq & k^{-1}(\beta_q- \gamma_{q-1})\left(\Exp(S_k(\varphi))+ \frac{1}{q}\right) +c \gamma_{q-1}.
\end{eqnarray*}
Thus, by the definition of $r_q$ we have, $r_q< e^{-S_{\beta_q}(\varphi)(\omega)}$.
\begin{equation*}
T^{\beta_q}(\pi(\omega))=\pi(\sigma^{\beta_q}(\omega)) \in \phi_{\beta_q|\omega|\gamma_q}([0,1])
\end{equation*}
Since $\omega \in S \subseteq \Gamma$, $\beta_q|\omega|\gamma_q \in \RR_q$ and hence
\begin{equation*}
T^{\beta_q}(\pi(\omega))\in \phi_{\beta_q|\omega|\gamma_q}([0,1]) \subseteq B(y,r_q) \subseteq B(y, e^{-S_{\beta_q}(\varphi)(\omega)}).
\end{equation*}
Since this holds for all $q \in \N$, $\pi(\omega) \in \F_y(\varphi)$.
\end{proof}

\begin{lemma} \label{Measure Dim Props} Suppose $\omega \in S$. Given $q \in \N$ and $\gamma_{q-1}< n \leq \beta_q$ we have,
\begin{eqnarray*}
- \log \W_q\left([\omega|n]\right) \geq t\left(S_n(\psi)(\omega|n)+k^{-1}(n-\gamma_{q-1})\Exp(S_k(\varphi))\right)\\
-\frac{3\gamma_{q-1}}{q-1}-2\gamma_{q-1} \rho_{\lambda_{q-1}}-\frac{2n}{q}-N(q)c,
\end{eqnarray*}
\begin{equation*}
- \log \W_q\left([\omega|\gamma_q]\right) \geq tS_{\gamma_q}(\psi)(\omega|\gamma_q)-\frac{3\gamma_{q}}{q}-2\gamma_q \rho_{\lambda_q}.
\end{equation*}
\end{lemma}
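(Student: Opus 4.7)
The plan is to prove both inequalities simultaneously by induction on $q$, exploiting the recursive product structure of $\W_q$. With the convention $\W_0=\delta_{\emptyset}$ and $\gamma_0=0$, the base case $q=1$ arises as a specialisation of the inductive step.

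For the first inequality, fix $n\in(\gamma_{q-1},\beta_q]$ and write $n-\gamma_{q-1}=jk+r$ with $0\leq r<k$. The Bernoulli normalisation $\sum_{\kappa\in\B}p(\kappa)=1$ together with $\sum_{\tau\in\RR_q}q(\omega,\tau)=1$ and non-negativity of the partial-block contribution give
\[
-\log\W_q([\omega|n])\ \geq\ -\log\W_{q-1}([\omega|\gamma_{q-1}])-\sum_{\nu=1}^{j}\log p(\kappa_\nu^q).
\]
The first summand is bounded below by applying the inductive hypothesis for the second inequality at stage $q-1$. For the second, when $j\geq N(q)$, the non-emptiness of $[\gamma_{q-1}|\omega|\beta_q]\cap S_q$ allows us to transfer the Kolmogorov bound (\ref{KLimits3}) from a witness $\omega'\in S_q$ to $\omega$, giving $-\sum\log p(\kappa_\nu^q)\geq t(S_{jk}(\psi)(\gamma_{q-1}|\omega|\gamma_{q-1}+jk)+j\Exp(S_k(\varphi)))-2n/q$. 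A standard tempered-distortion argument then combines $S_{\gamma_{q-1}}(\psi)(\omega|\gamma_{q-1})$ with the concatenated sum to produce $S_n(\psi)(\omega|n)$, and promotes $j$ to $k^{-1}(n-\gamma_{q-1})$ in the $\Exp(S_k(\varphi))$ coefficient at a cost bounded by $kc\Exp(S_k(\varphi))$, absorbed into the $N(q)c$ term. When $j<N(q)$ the target right-hand side is itself bounded by $O(N(q)c)$, so the inequality follows directly from the inductive hypothesis.

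For the second inequality, apply the first at $n=\beta_q$ and account for the extra factor $q(\omega|\beta_q,\tau_q)$ in passing from $[\omega|\beta_q]$ to $[\omega|\gamma_q]$. Using $q(\omega|\beta_q,\tau_q)\leq \diam(\phi_{\omega|\gamma_q}([0,1]))/(\zeta r_q)$ from the choice of $\RR_q$, together with the mean value theorem and tempered distortion, yields
\[
-\log\W_q([\omega|\gamma_q])\ \geq\ -\log\W_q([\omega|\beta_q])+S_{\gamma_q}(\psi)(\omega|\gamma_q)-\gamma_q\rho_{\lambda_q}+\log(\zeta r_q).
\]
The crux is the maximality clause in the definition of $\beta_q$: since $\beta_q+k$ violates (\ref{r q def}), we obtain the matching lower bound $\log r_q\geq -k^{-1}(\beta_q-\gamma_{q-1})(\Exp(S_k(\varphi))+1/q)-\gamma_{q-1}c-q-O(1)$. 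The $tk^{-1}(\beta_q-\gamma_{q-1})\Exp(S_k(\varphi))$ term from (A) combines with $\log(\zeta r_q)$ to yield a residual $(t-1)k^{-1}(\beta_q-\gamma_{q-1})\Exp(S_k(\varphi))$, which is absorbed by the excess $(1-t)S_{\gamma_q}(\psi)$ arising because the coefficient of $-\log\diam(\phi_{\omega|\gamma_q})$ in our bound is $1$ rather than $t$; what remains is precisely $tS_{\gamma_q}(\psi)(\omega|\gamma_q)$ plus errors.

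The main technical obstacle lies in the bookkeeping: the accumulated errors $3\gamma_{q-1}/(q-1)+2\gamma_{q-1}\rho_{\lambda_{q-1}}+N(q)c$ carried forward from the inductive hypothesis, together with the partial-block constants, the tempered-distortion slack on $V_{\omega|\gamma_{q-1}}$, and the $\gamma_{q-1}c+q+O(1)$ constants inherited from (\ref{r q def}), must all fit inside the prescribed budget $3\gamma_q/q+2\gamma_q\rho_{\lambda_q}$ at stage $q$. This is precisely the purpose of the enormous lower bound built into the definition of $\alpha_q$: the forced relation $\beta_q\geq\alpha_q$ ensures that $\gamma_{q-1}/\gamma_q$, $N(q)c/\gamma_q$ and $\max\{S_{\gamma_{q-1}}(\psi+\varphi)(\tau):\tau\in\Gamma_{q-1}\}/\gamma_q$ are all much smaller than $1/q^2$, so that every previous-stage constant is swept into $O(\gamma_q/q)$ at stage $q$. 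Verifying these asymptotics quantitatively, and ensuring that $\zeta$ and $c$ do not spoil the cancellation in the second inequality, is the primary labour of the argument.
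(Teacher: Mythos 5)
Your inductive structure — splitting the first inequality into the cases $\ell(n)<N(q)$ and $\ell(n)\geq N(q)$, using (\ref{KLimits3}) transferred from a witness in $S_q$, and relying on the size of $\alpha_q$ to absorb carried errors — matches the paper's argument. The treatment of the second inequality, however, contains a concrete error. You assert $q(\omega|\beta_q,\tau_q)\leq \diam(\phi_{\omega|\gamma_q}([0,1]))/(\zeta r_q)$, which amounts to claiming that $\sum_{\tau'\in\RR_q}\diam\left(\phi_{\omega|\beta_q}\circ\phi_{\tau'}([0,1])\right)\geq \zeta r_q$. The choice of $\RR_q$ only gives $\sum_{\tau'\in\RR_q}\diam\left(\phi_{\tau'}([0,1])\right)\geq\zeta r_q$; composing with $\phi_{\omega|\beta_q}$ contracts each summand by a factor of order $\diam\left(\phi_{\omega|\beta_q}([0,1])\right)\approx e^{-S_{\beta_q}(\psi)(\omega|\beta_q)}$, so the correct bound is
\begin{equation*}
q(\omega|\beta_q,\tau_q)\ \leq\ \frac{\diam\left(\phi_{\omega|\gamma_q}([0,1])\right)}{\zeta r_q}\,e^{S_{\beta_q}(\psi)(\omega|\beta_q)+\lambda_q\rho_{\lambda_q}},
\end{equation*}
which exceeds your claimed bound by a factor exponential in $\beta_q$. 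Consequently your display for $-\log\W_q([\omega|\gamma_q])$ overstates the truth by roughly $S_{\beta_q}(\psi)(\omega|\beta_q)$, and the claimed ``excess $(1-t)S_{\gamma_q}(\psi)$'' is not actually available.

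With the corrected bound the excess is only $(1-t)S_{\lambda_q}(\psi)(\beta_q|\omega|\gamma_q)$, and to dominate the residual $(1-t)k^{-1}(\beta_q-\gamma_{q-1})\Exp(S_k(\varphi))$ you would have to supply the additional observation that $\phi_{\beta_q|\omega|\gamma_q}([0,1])\subset B(y,r_q)$ forces $S_{\lambda_q}(\psi)(\beta_q|\omega|\gamma_q)\geq -\log(2r_q)\geq k^{-1}(\beta_q-\gamma_{q-1})\Exp(S_k(\varphi))$ up to lower-order terms. That would make your route work, but it is not what the paper does and is not present in your write-up. The paper instead uses $-\log q(\omega|\beta_q,\beta_q|\omega|\gamma_q)\geq -t\log q(\omega|\beta_q,\beta_q|\omega|\gamma_q)$ (valid because $0\leq q\leq 1$ and $t<1$), after which the $tS_{\beta_q}(\psi)(\omega|\beta_q)$ term from the first inequality cancels directly against the $-tS_{\beta_q}(\psi)(\omega|\beta_q)$ arising from $t\log\sum_{\tau'}\diam\left(\phi_{\omega|\beta_q}\circ\phi_{\tau'}\right)$, and the $t k^{-1}(\beta_q-\gamma_{q-1})\Exp(S_k(\varphi))$ and $t\log r_q$ terms cancel to a bounded quantity using only the upper bound on $-\log r_q$ coming from the maximality of $\beta_q$. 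This avoids any appeal to a lower bound on $S_{\lambda_q}(\psi)$ of the $\RR_q$-block and is the step your argument is missing.
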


\begin{proof}
We prove the lemma by induction. The lemma is trivial for $q=0$. Now suppose that 
\begin{equation*}
- \log \W_{q-1}\left([\omega|\gamma_q]\right) \geq tS_{\gamma_{q-1}}(\psi)(\omega|\gamma_{q-1})-\frac{3\gamma_{q-1}}{q-1}-2\gamma_{q-1} \rho_{\lambda_{q-1}}.
\end{equation*}
Take $\gamma_{q-1}< n \leq \beta_q$ consider $\ell(n):=\lfloor k^{-1}(n-\gamma_{q-1})\rfloor$. If $\ell(n)< N(q)$ then clearly 
\begin{eqnarray*}
S_{n}(\psi)(\omega|n) &\leq& S_{\gamma_{q-1}}(\psi)(\omega|\gamma_{q-1})+S_{n-\gamma_q}(\psi)(\gamma_{q-1}|\omega|n)\\
& \leq & S_{\gamma_q}(\psi)(\omega|\gamma_{q-1})+N(q)c,
\end{eqnarray*}
\begin{eqnarray*}
k^{-1}(n-\gamma_{q-1})\Exp(S_k(\varphi)) \leq N(q) c
\end{eqnarray*}
Since $t<1$ and $N(q-1) \leq N(q)$ it follows from the inductive hypothesis together with the definition of $\W_q$ that,
\begin{eqnarray*}
- \log \W_q\left([\omega|n]\right) & \geq & - \log \W_{q-1}\left([\omega|\gamma_{q-1}]\right) \\
& \geq & t\left(S_n(\psi)(\omega|n)+k^{-1}(n-\gamma_{q-1})\Exp(S_k(\varphi))\right)\\
& & -\frac{3\gamma_{q-1}}{q-1}-2\gamma_{q-1} \rho_{\lambda_{q-1}}-2N(q)c.
\end{eqnarray*}
On the other hand, if $\ell(n) \geq N(q)$ then by equation (\ref{KLimits3}) together with $[ \gamma_{q-1}|\omega|\beta_q ] \cap S_q \neq \emptyset$ we have
\begin{eqnarray*}
\sum_{\nu=k^{-1}\gamma_{q-1}}^{k^{-1}\gamma_q+\ell(n)-1}\log p(\omega_{k\nu+1},\cdots, \omega_{k\nu+k}) &<& - t \left(S_{k\ell(n)}(\psi)(\gamma_{q-1}|\omega|\gamma_{q-1}+k \ell(n))+\ell(n)\Exp(S_k(\varphi)) \right)+ \frac{2n}{q}\\
&<& - t \left(S_{n-\gamma_{q-1}}(\psi)(\omega|n-\gamma_{q-1})+k^{-1}(n-\gamma_{q-1})\Exp(S_k(\varphi)) \right)\\&& +2c+ \frac{2n}{q}.
\end{eqnarray*}
Moreover, by the defintion of $\W_q$ we have,
\begin{eqnarray*}
- \log \W_q\left([\omega|n]\right) & \geq & - \log \W_{q-1}\left([\omega|\gamma_{q-1}]\right) -\sum_{\nu=0}^{\ell(n)-1}\log p(\omega_{k\nu+1},\cdots, \omega_{k\nu+k})\\
&\geq & t\left(S_{\gamma_{q-1}}(\psi)(\omega|\gamma_{q-1})+S_{n-\gamma_{q-1}}(\psi)(\omega|n-\gamma_{q-1})+k^{-1}(n-\gamma_{q-1})\Exp(S_k(\varphi)) \right)\\
&&--\frac{3\gamma_{q-1}}{q-1}-2\gamma_{q-1} \rho_{\lambda_{q-1}}-2c- \frac{2n}{q}\\
&\geq & t\left(S_{n}(\psi)(\omega|n)+k^{-1}(n-\gamma_{q-1})\Exp(S_k(\varphi)) \right)\\
&&-\frac{3\gamma_{q-1}}{q-1}-2\gamma_{q-1} \rho_{\lambda_{q-1}}-N(q)c- \frac{2n}{q}.
\end{eqnarray*}
In particular we have
\begin{eqnarray*}
- \log \W_q\left([\omega|\beta_q]\right) & \geq &  t\left(S_{\beta_q}(\psi)(\omega|\beta_q)+k^{-1}(\beta_q-\gamma_{q-1})\Exp(S_k(\varphi)) \right)\\ &&-\frac{3\gamma_{q-1}}{q-1}-2\gamma_{q-1} \rho_{\lambda_{q-1}}-N(q)c- \frac{2\beta_q}{q}.
\end{eqnarray*}
Note that,
\begin{eqnarray*}
- \log \W_q\left([\omega|\gamma_q]\right) &=& - \log \W_q\left([\omega|\beta_q]\right) - \log q(\omega|\beta_q,\beta_q|\omega|\gamma_q)\\
&=& - \log \W_q\left([\omega|\beta_q]\right) - \log \left(\frac{\diam \left(\phi_{\omega|\gamma_q}([0,1])\right) }{\sum_{\tau \in \RR_q} \diam \left(\phi_{\omega|\beta_q} \circ \phi_{\tau}([0,1])\right)}\right)\\
&\geq & - \log \W_q\left([\omega|\beta_q]\right) - t\log \left(\frac{\diam \left(\phi_{\omega|\gamma_q}([0,1])\right) }{\sum_{\tau \in \RR_q} \diam \left( \phi_{\omega|\beta_q} \circ \phi_{\tau}([0,1])\right)}\right).
\end{eqnarray*}
Clearly,
\begin{eqnarray*}
- \log \diam \left(\phi_{\omega|\gamma_q}([0,1])\right) \geq S_{\gamma_q}(\psi)(\omega|\gamma_q)-\gamma_q \rho_{\gamma_q}
\end{eqnarray*}
Moreover,
\begin{eqnarray*}
\sum_{\tau \in \RR_q} \diam \left( \phi_{\omega|\beta_q \circ \tau}([0,1])\right)& \geq &
\sum_{\tau \in \RR_q} \exp\left(-S_{\gamma_q}(\psi)(\omega|\beta_q,\tau)\right)\\
& \geq & e^{-S_{\beta_q}(\psi)(\omega|\beta_q)} \sum_{\tau \in \RR_q} e^{-S_{\lambda_q}(\psi)(\tau)}\\
&\geq & e^{-S_{\beta_q}(\psi)(\omega|\beta_q)-\lambda_q \rho_{\lambda_q}} \sum_{\tau \in \RR_q} \diam \left( \phi_{\tau}([0,1]) \right)\\
& \geq & e^{-S_{\beta_q}(\psi)(\omega|\beta_q)-\lambda_q \rho_{\lambda_q}} \zeta r_q.
\end{eqnarray*}
Note that from the definition of $\beta_q$ and $c$ we have,
\begin{eqnarray*}
- \log r_q \leq  k^{-1}(\beta_q-\gamma_{q-1})\Exp(S_k(\varphi))+c(\gamma_{q-1}+1)+q
\end{eqnarray*}
Combining these inequalities we see that,
\begin{eqnarray*}
- \log \W_q\left([\omega|\gamma_q]\right) &\geq &tS_{\gamma_q}(\psi)(\omega|\gamma_q)-\gamma_q \rho_{\gamma_q}- N(q)c -\frac{2\beta_q}{q}\\&& -\frac{3\gamma_{q-1}}{q-1}-2\gamma_{q-1} \rho_{\lambda_{q-1}}-\lambda_q \rho_{\lambda_q}-c(\gamma_{q-1}+1)-q + \log \zeta\\
&\geq &tS_{\gamma_q}(\psi)(\omega|\gamma_q)-\frac{3\gamma_{q}}{q}-2\gamma_{q} \rho_{\lambda_{q}},
\end{eqnarray*}
since $\gamma_q\geq \beta_q \geq \alpha_q$ and by the definition of $\alpha_q$, 
\begin{equation*}
\alpha_q>q \left( \frac{3\gamma_{q-1}}{q-1}+2\gamma_{q-1} \rho_{\lambda_{q-1}}+c(\gamma_{q-1}+1)+q -\log \zeta \right).
\end{equation*}
\end{proof}

We define a Borel measure $\mu$ by $\mu(A):=\W(S \cap \pi^{-1}(A))$ for Borel sets $A \subseteq [0,1]$.

\begin{lemma}\label{mu has positive measure} $\mu([0,1])>0$. 
\end{lemma}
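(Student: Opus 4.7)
The plan is to reduce $\mu([0,1])$ to $\W(S)$ and then estimate $\W(S)$ by exploiting the product-like structure of $\W$. Since $\pi(\Sigma)\subseteq[0,1]$ we have $S\cap\pi^{-1}([0,1])=S$, so $\mu([0,1])=\W(S)$. Recall the definition $S=\{\omega\in\Gamma:[\gamma_{q-1}|\omega|\beta_q]\cap S_q\neq\emptyset\text{ for all }q\in\N\}$, and for each $q\in\N$ introduce the event
\[
E_q:=\left\{\omega\in\Gamma:[\gamma_{q-1}|\omega|\beta_q]\cap S_q\neq\emptyset\right\},
\]
so that $S=\bigcap_{q\in\N}E_q$. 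The strategy is to show (i) the $E_q$ are $\W$-independent, and (ii) $\W(E_q)\geq 1-\delta_q$; since the $\delta_q$ were chosen with $\prod_q(1-\delta_q)>0$, this forces $\W(S)>0$.

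For (i), unwinding the inductive definition of $\W_q$, the measure on $\Gamma_q$ factors: for $\omega^q=(\omega^{q-1},\kappa^q_1,\dots,\kappa^q_{\ell_q},\tau_q)$ with $\ell_q:=k^{-1}(\beta_q-\gamma_{q-1})$,
\[
\W_q(\omega^q)=\W_{q-1}(\omega^{q-1})\Big(\prod_{\nu=1}^{\ell_q}p(\kappa^q_\nu)\Big)\,q\bigl((\omega^{q-1},\kappa^q),\tau_q\bigr),
\]
and $\sum_{\tau\in\RR_q}q((\omega^{q-1},\kappa^q),\tau)=1$, $\sum_{\kappa\in\B^{\ell_q}}\prod_\nu p(\kappa_\nu)=1$. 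Therefore the $\W$-distribution of the block $\gamma_{q-1}|\omega|\beta_q\in\B^{\ell_q}$ is the product Bernoulli measure with weights $p(\cdot)$, independent of all other blocks $\gamma_{q'-1}|\omega|\beta_{q'}$ for $q'\neq q$ and independent of the $\RR_{q'}$-choices. Since $E_q$ is determined by $\gamma_{q-1}|\omega|\beta_q$ alone, the events $\{E_q\}_{q\in\N}$ are mutually $\W$-independent.

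For (ii), observe that the $\W$-pushforward of $\omega\mapsto\gamma_{q-1}|\omega|\beta_q$ coincides with the projection of the Bernoulli measure $\mu$ on $\B^\N$ to its first $\ell_q$ coordinates. A finite word $\eta\in\B^{\ell_q}$ satisfies $[\eta]\cap S_q\neq\emptyset$ precisely when $\eta$ lies in the $\ell_q$-coordinate image of $S_q$; since $S_q$ is contained in this cylindrical enlargement,
\[
\W(E_q)\;\geq\;\mu(S_q)\;>\;1-\delta_q.
\]
Combining (i) and (ii) gives
\[
\mu([0,1])=\W(S)=\W\Big(\bigcap_{q\in\N}E_q\Big)=\prod_{q\in\N}\W(E_q)\geq\prod_{q\in\N}(1-\delta_q)>0,
\]
as required. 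The only subtlety is the bookkeeping that the Bernoulli blocks are genuinely independent under $\W$, but this is immediate from the multiplicative form of $\W_q$; no further estimates on $\Gamma_{q-1}$ or $\RR_q$ are needed since their contributions sum to $1$ in the normalization.
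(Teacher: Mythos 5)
Your proof is correct and follows the same route as the paper: the paper's one‑line proof simply asserts $\W(S)\geq\prod_{q\in\N}(1-\delta_q)>0$, and you have supplied the justification — the product structure of $\W$ makes the Bernoulli blocks $\gamma_{q-1}|\omega|\beta_q$ mutually independent with marginal equal to the product Bernoulli on $\B^{\ell_q}$, and the cylindrical hull of $S_q$ gives $\W(E_q)\geq\mu(S_q)>1-\delta_q$.
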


\begin{proof}
This follows immediately from the fact that 
\begin{eqnarray*}
\W(S) \geq \prod_{q \in \N}(1-\delta_q)>0.
\end{eqnarray*}
\end{proof}

\begin{lemma}\label{Pointwise Dimension Lemma}For all $\omega \in S$ we have 
\begin{equation*}
\liminf_{r \rightarrow 0} \frac{ \log \mu( B(\pi(\omega),r) )}{\log r} \geq t.
\end{equation*}
\end{lemma}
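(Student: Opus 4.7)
The strategy is to bound $\mu(B(\pi(\omega),r))$ from above by the $\W$-measure of a suitably chosen cylinder, invoke Lemma \ref{Measure Dim Props} to turn this into a lower bound on $-\log \mu(B(\pi(\omega),r))$ in terms of Birkhoff sums of $\psi$, and then compare with $-\log r$, which is itself of order $S_n(\psi)(\omega|n)$ at the relevant level $n$. The crucial localization tool is the separation estimate (\ref{D Distance Equation}), which applies whenever one of the two competing cylinders ends with a $k$-block drawn from $\B$.

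Fix $\omega \in S$. I will first address the \emph{Bernoulli subrange}, namely those $r$ lying in an interval $[e^{-S_{m+k}(\psi)(\omega|m+k)-c},e^{-S_m(\psi)(\omega|m)-c})$ for some $m = \gamma_{q-1}+ik$ with $0 \leq i < k^{-1}(\beta_q-\gamma_{q-1})$. Any $\omega'\in \Gamma$ with $\pi(\omega')\in B(\pi(\omega),r)$ must agree with $\omega$ up to level $m+k$: a disagreement at any Bernoulli breakpoint $m_j$ with $j \leq i$ would force, via (\ref{D Distance Equation}), a distance $|\pi(\omega')-\pi(\omega)|\geq e^{-S_{m_j}(\psi)(\omega|m_j)-c} \geq e^{-S_m(\psi)(\omega|m)-c}>r$. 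Hence $\mu(B(\pi(\omega),r))\leq \W([\omega|m+k])$, and Lemma \ref{Measure Dim Props} together with $-\log r \leq S_{m+k}(\psi)(\omega|m+k)+c$ yields a ratio $\geq t-o(1)$.

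The genuinely new difficulty arises when $r$ corresponds to a scale inside the $\RR_q$ block, where (\ref{D Distance Equation}) is unavailable for $\RR_q$-cylinders. When $r$ still exceeds the diameter of the cluster $\phi_{\omega|\beta_q}(B(y,r_q))$, the Bernoulli separation at level $\beta_q$ still yields $\mu(B(\pi(\omega),r)) \leq \W([\omega|\beta_q])$; since $-\log r$ is bounded above by $S_{\beta_q}(\psi)(\omega|\beta_q) + k^{-1}(\beta_q-\gamma_{q-1})\Exp(S_k(\varphi))$, Lemma \ref{Measure Dim Props} already suffices. When $r$ sits properly inside the $\RR_q$-cluster I will instead exploit the proportionality $q(\omega|\beta_q,\tau) = \diam(\phi_{\omega|\beta_q}\circ\phi_{\tau}([0,1]))/\sum_{\tau'\in\RR_q}\diam(\phi_{\omega|\beta_q}\circ\phi_{\tau'}([0,1]))$: the $\RR_q$-sub-cylinders intersecting $B(\pi(\omega),r)$ have total width at most $2r$, while the denominator exceeds $\zeta r_q \diam(\phi_{\omega|\beta_q}([0,1]))$ by the positive upper cylinder density. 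Combining this with Lemma \ref{Measure Dim Props} at level $\beta_q$ and the inequality $-\log r_q > k^{-1}(\beta_q-\gamma_{q-1})\Exp(S_k(\varphi))+\gamma_{q-1}c+q$, the potentially dangerous term $tk^{-1}(\beta_q-\gamma_{q-1})\Exp(S_k(\varphi))$ from Lemma \ref{Measure Dim Props} is absorbed by $\log r_q$, leaving $-\log \mu(B(\pi(\omega),r))\geq t(-\log r) - E_q'$ with $E_q'$ a negligible error.

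As $r\to 0$ one is forced into $q\to\infty$, and the extravagantly large choice of $\alpha_q$ (and hence $\beta_q$) ensures that error contributions of the form $\gamma_{q-1}/q$, $\gamma_{q-1}\rho_{\lambda_{q-1}}$, and $N(q)c$ are negligible against $-\log r \geq S_{\beta_q}(\psi) \geq \beta_q \log \xi$. The main obstacle is the interior $\RR_q$ subrange: since (\ref{D Distance Equation}) does not apply there, the entire argument turns on the geometric compensation between the proportional conditional weights $q(\omega|\beta_q,\cdot)$ and the radius $r_q$, and this is precisely what motivates the careful definitions of $r_q$ and $\beta_q$ in the construction.
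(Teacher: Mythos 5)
Your decomposition into a ``Bernoulli subrange'' and an ``$\RR_q$ range'' matches the paper's structure, and your handling of the Bernoulli range (agreement up to level $m+k$ via (\ref{D Distance Equation}), then Lemma \ref{Measure Dim Props}) is sound, as is the top of the $\RR_q$ range where $\mu(B)\leq\W([\omega|\beta_q])$. The gap is in the interior $\RR_q$ range. Your key claim --- ``the $\RR_q$-sub-cylinders intersecting $B(\pi(\omega),r)$ have total width at most $2r$'' --- is false: the two \emph{boundary} cylinders $\phi_{\omega|\beta_q}\circ\phi_\tau([0,1])$ that merely overlap an endpoint of $B(\pi(\omega),r)$ can be far wider than $r$, since the lengths of the words in $\RR_q$ are fixed ($\lambda_q$) but the contraction ratios of individual letters are not uniform. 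Worse, the measure $\mu$ is not absolutely continuous with respect to Lebesgue inside such a cylinder (it is concentrated on a thin Cantor-like subset coming from the later stages of the construction), so a narrow geometric intersection with a wide boundary cylinder can still carry essentially all of $\mu(B(\pi(\omega),r))$. The proportionality $q(\omega|\beta_q,\tau)$ only controls the $\mu$-mass of \emph{entire} $\RR_q$-sub-cylinders, not arbitrary sub-intervals of them.

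The paper handles exactly this by splitting $\RR_q$ into $\T$ (those $\tau$ with more than half of $\phi_{\omega|\beta_q}\circ\phi_\tau([0,1])$ inside $B(x,r)$, which do satisfy $\sum_{\tau\in\T}\diam(\cdot)\leq 2r$, making your proportionality argument valid) and the at-most-two boundary cylinders $\C$. The set $\C$ requires a genuinely separate, recursive argument --- claim (\ref{LD3}) --- in which one picks a point $\pi(\tilde\omega)$ of $\pi(S)$ on the far side of the partial intersection, forms a new ball $B(\pi(\tilde\omega),\tilde r)$, and either applies Lemma \ref{Measure Dim Props} at level $\beta_{q+1}$ (using (\ref{r and Q}) to bound $-\log r$ by $\beta_{q+1}\log\xi$) or bounces back to the Bernoulli-range estimate at level $q+1$. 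This boundary handling is a substantial portion of the paper's proof and is entirely missing from your proposal; without it, the interior $\RR_q$ range is not actually controlled. Everything else in your outline --- the absorption of $t\,k^{-1}(\beta_q-\gamma_{q-1})\Exp(S_k(\varphi))$ by $\log r_q$, the role of the enormous $\alpha_q$ in killing the $\gamma_{q-1}$-dependent errors --- is correct in spirit and matches the paper.
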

\begin{proof}
For the proof of Lemma \ref{Pointwise Dimension Lemma} we shall require some additional notation. Given a pair of functions $f$ and $g$, depending on $q \in \N$ and $r \in (0,1)$, we shall write,
\begin{equation}
f(q,r)\geq g(q,r)-\Error,
\end{equation}
to denote that for each $\epsilon>0$ there exists an $N \in \N$ and a $\delta>0$ such that given any $(q, r) \in \N\times (0,1)$ with $q>N$ and $r< \delta$ we have 
\begin{equation}
f(q,r) \geq g(q,r)-\epsilon.
\end{equation}
Note that by (\ref{r q def}) $r_q<e^{-q}$ for all $q \in \N$ and by Definition \ref{PUCD} this implies that $\lim_{q\rightarrow \infty} \lambda_{q}=\lim_{q\rightarrow \infty} \lambda_{r_q}=\infty$ and hence $\lim_{q \rightarrow \infty} \rho_{\lambda_q}=0$. Thus for any function $g:\N \times (0,1) \rightarrow \R$, 
\begin{equation*}
g(q,r)-\rho_{\lambda_q}\geq g(q,r)- \Error.
\end{equation*}
Similarly, it follows from the definition of $\beta_q$ that  
\begin{equation*}
g(q,r)-cN(q)N(q+1)\beta_q^{-1}\geq g(q,r)- \Error.
\end{equation*}

Firstly we show that for any $x= \pi(\omega)$ with $\omega \in S$ $B(x,r)$ and $r>0$ for which there exists $q \in \N$ and $l \in \N$ with $\gamma_{q-1} \leq l < \beta_q$ such that
\begin{eqnarray*}
B(x,r)\cap \pi(S) \subseteq \phi_{\omega|l}([0,1]) \text{ but }  B(x,r)\cap \pi(S) &\not\subseteq& \phi_{\omega|l+1}([0,1])
\end{eqnarray*} 
satisfies
\begin{eqnarray}\label{LD1}
\frac{\log \mu ( B(x,r))}{\log r} &\geq & t- \Error.
\end{eqnarray}
Indeed, as $B(x,r)\cap \pi(S) \subseteq \phi_{\omega|l}([0,1])$ it follows from Lemma \ref{Measure Dim Props} that,
\begin{eqnarray*}
-\log \mu ( B(x,r)) & \geq &-  \log \W \left([\omega|l]\right) \\
&= & - \log \W_q\left([\omega|l]\right)\\& \geq &t S_l(\psi)(\omega|l)
-\frac{3\gamma_{q-1}}{q-1}-2\gamma_{q-1} \rho_{\lambda_{q-1}}-\frac{2l}{q}-N(q)c\\
&= & - \log \W_q\left([\omega|l]\right)\\& \geq &t S_l(\psi)(\omega|l)
-\frac{6l}{q-1}-2l \rho_{\lambda_{q-1}},
\end{eqnarray*}
since $l \geq \gamma_{q-1}>qN(q)c$.
Since $S_l(\psi)(\omega|l)\geq l\log \xi$ this implies
\begin{eqnarray*}
\frac{\log \mu ( B(x,r))}{S_l(\psi)(\omega|l)} &\geq &t -\log \xi^{-1}\left(\frac{6}{q-1}+2 \rho_{\lambda_{q-1}}\right).
\end{eqnarray*}

However, $B(x,r)\cap \pi(S)\not\subseteq \phi_{\omega|l+1}([0,1])$ and hence $B(x,r)\cap \pi(S) \not\subseteq \phi_{\omega|\kappa(l)}([0,1])$ where $\kappa(l):=k \lceil k^{-1}(l+1)\rceil$. It follows that $B(x,r)\cap \pi(S)$ intersects $\phi_{\tau|\kappa(l)}([0,1])$, for some $\tau \in S$, as well as $\phi_{\omega|\kappa(l)}([0,1])$. Since $\kappa(l) \leq \beta_q$ and $\omega,\tau \in S$, $(\kappa(l)-k)|\omega|\kappa(l),(\kappa(l)-k)|\tau|\kappa(l) \in \B$. Thus, by (\ref{D Distance Equation}),
\begin{eqnarray*} r &\geq& \frac{1}{2} e^{-S_n(\psi)(\omega|\kappa(l)-k)-c}\\
&\geq& e^{-S_n(\psi)(\omega|l)-c-\log 2}.
\end{eqnarray*}
Thus,
\begin{eqnarray*}
\frac{\log \mu ( B(x,r))}{\log r} &\geq &\left(1+\frac{c+\log 2}{\log r}\right)\left(t -\log \xi^{-1}\left(\frac{6}{q-1}+2 \rho_{\lambda_{q-1}}\right)\right)
\end{eqnarray*}
which implies the first claim (\ref{LD1}).

Secondly, we show that given $\omega \in S$, $x \in [0,1]$ and $r>0$ for which $B(x,r)\cap \pi(S)\subseteq \phi_{\omega|\beta_q}([0,1])$ and yet $B(x,r)\cap \pi(S) \not\subseteq \phi_{\omega|\beta_q}\circ \phi_{\tau}([0,1])$ for any $\tau \in \RR_q$ we have,
\begin{eqnarray}\label{LD2}
\frac{ \log \mu\left(B(x,r)\right)}{\log r} \geq t- \Error.
\end{eqnarray}

From the proof of Lemma \ref{Measure Dim Props} we have,
\begin{eqnarray*}
- \log \W_q\left([\omega|\beta_q]\right) & \geq &  t\left(S_{\beta_q}(\psi)(\omega|\beta_q)+k^{-1}(\beta_q-\gamma_{q-1})\Exp(S_k(\varphi)) \right)\\ &&-\frac{3\gamma_{q-1}}{q-1}-2\gamma_{q-1} \rho_{\lambda_{q-1}}-N(q)c- \frac{2\beta_q}{q}\\
- \log r_q &\leq&  k^{-1}(\beta_q-\gamma_{q-1})\Exp(S_k(\varphi))+c(\gamma_{q-1}+1)+q\\
\sum_{\tau \in \RR_q} \diam \left( \phi_{\omega|\beta_q \circ \tau}([0,1])\right)& \geq &
 e^{-S_{\beta_q}(\psi)(\omega|\beta_q)-\lambda_q \rho_{\lambda_q}} \zeta r_q.
\end{eqnarray*}
Suppose $r>r_q$. Then by the first two inequalities together with the fact that $B(x,r) \subseteq \phi_{\omega|\beta_q}([0,1])$ we have
\begin{eqnarray*}
-\log \mu(B(x,r)) & \geq & - \log \W_q\left([\omega|\beta_q]\right) \\
& \geq & - t \log r-\left(\frac{3\gamma_{q-1}}{q-1}+2\gamma_{q-1} \rho_{\lambda_{q-1}}+N(q)c+\frac{2\beta_q}{q}+c(\gamma_{q-1}+1)+q\right).
\end{eqnarray*}
Note also that $B(x,r) \subseteq \phi_{\omega|\beta_q}([0,1])$ implies $- \log r>\beta_q \log \xi > \gamma_{q-1} \log \xi$ and hence,
\begin{eqnarray*}
\frac{\log \mu(B(x,r))}{\log r} & \geq & t - \log \xi^{-1}\left(\frac{3}{q-1}+2\rho_{\lambda_{q-1}}+\frac{N(q)c+c(\gamma_{q-1}+1) +q}{\beta_q}+\frac{2}{q}\right)\\
& \geq & t- \Error.
\end{eqnarray*}

Now suppose that $r\leq r_q$ and let $\T$ denote the following collection,
\begin{eqnarray*}
\T &:=&\left\lbrace \tau \in \RR_q: \frac{\diam\left(\phi_{\omega|\beta_q} \circ \phi_{\tau}([0,1]) \cap B(x,r)\right)}{\diam \left(\phi_{\omega|\beta_q} \circ \phi_{\tau}([0,1])\right)}> \frac{1}{2}  \right\rbrace.
\end{eqnarray*}
We also define $B_{\T}(x,r) \subseteq B(x,r)$ by,
\begin{eqnarray*}
B_{\T}(x,r):= \bigcup_{\tau \in \T} \phi_{\omega|\beta_q} \circ \phi_{\tau}([0,1]) 
\end{eqnarray*}

From the definition of $\mu$ and $\W$ we see that for each $\tau \in \RR_q$ we have,
\begin{eqnarray*}
\mu(\phi_{\omega|\beta_q}\circ \phi_{\tau}([0,1])) & \leq & \W_q\left([\omega|\beta_q, \tau]\right)\\
 &\leq & \W_q\left([\omega|\beta_q]\right) \cdot \frac{\diam \left(\phi_{\omega|\beta_q}\circ \phi_{\tau}([0,1])\right) }{\sum_{\tau \in \RR_q} \diam \left( \phi_{\omega|\beta_q} \circ \phi_{\tau}([0,1])\right)}.
\end{eqnarray*}
Hence, as $t<1$,
\begin{eqnarray*}
\mu(B_{\T}(x,r))& \leq & \sum_{\tau \in \T}\mu(\phi_{\omega|\beta_q}\circ \phi_{\tau}([0,1]))\\
& \leq & \W_q\left([\omega|\beta_q]\right) \cdot \frac{ \sum_{\tau \in \T}\diam \left(\phi_{\omega|\beta_q}\circ \phi_{\tau}([0,1])\right) }{\sum_{\tau \in \RR_q} \diam \left( \phi_{\omega|\beta_q} \circ \phi_{\tau}([0,1])\right)}
\\
& \leq & \W_q\left([\omega|\beta_q]\right) \left( \frac{ \sum_{\tau \in \T}\diam \left(\phi_{\omega|\beta_q}\circ \phi_{\tau}([0,1])\right) }{\sum_{\tau \in \RR_q} \diam \left( \phi_{\omega|\beta_q} \circ \phi_{\tau}([0,1])\right)}\right)^t
\\
& \leq & 2\W_q\left([\omega|\beta_q]\right) \left( \sum_{\tau \in \RR_q} \diam \left( \phi_{\omega|\beta_q} \circ \phi_{\tau}([0,1])\right)\right)^{-t}r^t.
\end{eqnarray*}
Piecing the previous inequalities together with the observations from the proof of Lemma \ref{Measure Dim Props} we obtain
\begin{eqnarray*}
- \log \mu\left(B_{\T}(x,r) \right)
\end{eqnarray*}
\begin{eqnarray*}
 \geq -t \log r -\left(\frac{3\gamma_{q-1}}{q-1}+2\gamma_{q-1} \rho_{\lambda_{q-1}}+N(q)c+ \frac{2\beta_q}{q}+c(\gamma_{q-1}+1)+q+ \lambda_q \rho_{\lambda_q} - \log \zeta - \log 2\right).
\end{eqnarray*}
Now $\lambda_q<d \log r_q \leq d \log r$, where $d<0$ is the constant as appears in the positive upper cylinder density condition. Hence,
\begin{eqnarray}\label{LD4} 
\frac{ \log \mu\left(B_{\T}(x,r)\right)}{\log r}
\end{eqnarray}
\begin{eqnarray*} &\geq & \nonumber  t -\log \xi ^{-1}\left(\frac{3}{q-1}+2 \rho_{\lambda_{q-1}}+\frac{N(q)c+c(\gamma_{q-1}+1)+q- \log \zeta+ \log 2}{\beta_q}+ \frac{2}{q} \right)+d \rho_{\lambda_q}\\
& \geq & t- \Error.
\end{eqnarray*}

Consider the set $\C:=\left\lbrace \tau \in \RR_q: \phi_{\omega|\beta_q} \circ \phi_{\tau}([0,1]) \cap B(x,r) \neq \emptyset, \tau \notin \T \right\rbrace$. It is clear that $\C$ contains at most two elements, with $\phi_{\omega|\beta_q} \circ \phi_{\tau}([0,1])$ containing either $\inf B(x,r)$ or $\sup B(x,r)$. We shall show that for $\tau \in \C$ we have,
\begin{eqnarray}\label{LD3}
\frac{\log \mu\left((\phi_{\omega|\beta_q} \circ \phi_{\tau})([0,1]) \cap B(x,r)\right)}{\log r}
& \geq & t- \Error.
\end{eqnarray}
Take $\tau \in \C$ and assume that $\sup B(x,r) \in \phi_{\omega|\beta_q} \circ \phi_{\tau}([0,1])$ ie. $ \phi_{\omega|\beta_q} \circ \phi_{\tau}([0,1])$ intersects the right hand boundary of $B(x,r)$. Since $\tau \notin \T$ we have $\diam\left(\phi_{\omega|\beta_q} \circ \phi_{\tau}([0,1]) \cap B(x,r)\right)< \frac{1}{2}\diam\left(\phi_{\omega|\beta_q} \circ \phi_{\tau}([0,1])\right).$ Choose $\tilde{\omega}\in S$ such that $\pi(\tilde{\omega})$ is on the right hand side of $\phi_{\omega|\beta_q} \circ \phi_{\tau}([0,1]) \cap B(x,r)\cap \pi(S)$. Define $\tilde{r}:=|\pi(\tilde{\omega})-\inf (\phi_{\omega|\beta_q} \circ \phi_{\tau})([0,1])|,$
and consider $B(\pi(\tilde{\omega}), \tilde{r})$. Since $\pi(\tilde{\omega})$ is on the right hand side of $(\phi_{\omega|\beta_q} \circ \phi_{\tau})([0,1]) \cap B(x,r)\cap \pi(S)$ and 
\begin{eqnarray*}
\diam\left(\phi_{\omega|\beta_q} \circ \phi_{\tau}([0,1]) \cap B(x,r)\right)<\frac{1}{2}\diam\left(\phi_{\omega|\beta_q} \circ \phi_{\tau}([0,1])\right),
\end{eqnarray*}
we have 
\begin{eqnarray*}
(\phi_{\omega|\beta_q} \circ \phi_{\tau})([0,1]) \cap B(x,r) \cap \pi(S) \subseteq B(\pi(\tilde{\omega}), \tilde{r}) \subseteq (\phi_{\omega|\beta_q} \circ \phi_{\tau})([0,1])
\end{eqnarray*}
and $\tilde{\omega}|\gamma_q=(\omega|\beta_q, \tau)$. 

We consider two cases. First suppose that $B(\pi(\tilde{\omega}),\tilde{r})\subseteq \phi_{\tilde{\omega}|\beta_{q+1}}([0,1])$. It follows from Lemma \ref{Measure Dim Props} that,
\begin{eqnarray*}
-\log \mu\left(B(\pi(\tilde{\omega}),\tilde{r})\right)&\geq & -\log \W_{q+1}\left([\tilde{\omega}|\beta_{q+1}]\right)\\
& \geq & t\left(S_{\beta_{q+1}}(\psi)(\omega|\beta_{q+1})+k^{-1}(\beta_{q+1}-\gamma_{q-1})\exp(S_k(\varphi))\right)\\
&&-\frac{3\gamma_{q}}{q}-2\gamma_{q} \rho_{\lambda_{q}}-\frac{2\beta_{q+1}}{q+1}-N(q+1)c\\
& \geq & t \beta_{q+1} \log \xi -\left(k \log \xi+cN(q+1)+\frac{5\beta_{q+1}}{q}+2\beta_{q+1}\rho_{\lambda_q}\right).
\end{eqnarray*}
Hence,
\begin{eqnarray*}
\frac{-\log \mu\left((\phi_{\omega|\beta_q} \circ \phi_{\tau})([0,1]) \cap B(x,r)\right)}{\beta_{q+1}\log \xi}
& \geq & t  -\log \xi^{-1}\left(\frac{k \log \xi+cN(q+1)}{\beta_{q+1}}+\frac{5}{q}+2\rho_{\lambda_q}\right).
\end{eqnarray*}
Since $B(x,r) \cap \pi(S) \not\subseteq (\phi_{\omega|\beta_q}\circ \phi_{\tau'})([0,1])$ for any $\tau' \in \RR_q$, it follows from (\ref{D Distance Equation}) that
\begin{eqnarray}\label{r and Q}
-\log r & \leq & -\max \left\lbrace S_{\gamma_{q}}(\psi)(\tau'): \tau' \in \Gamma_{q} \right\rbrace-c\\ \nonumber
& \leq & \alpha_{q+1} \log \xi<\beta_{q+1} \log \xi.
\end{eqnarray}
Thus,
\begin{eqnarray*}
\frac{\log \mu\left((\phi_{\omega|\beta_q} \circ \phi_{\tau})([0,1]) \cap B(x,r)\right)}{\log r}
& \geq & t  -\log \xi^{-1}\left(\frac{k \log \xi+cN(q+1)}{\beta_{q+1}}+\frac{5}{q}+2\rho_{\lambda_q}\right)\\
& \geq & t  -\Error.
\end{eqnarray*}
Now suppose that $B(\pi(\tilde{\omega}),\tilde{r})\not\subseteq \phi_{\tilde{\omega}|\beta_{q+1}}([0,1])$. Then we may apply (\ref{LD1}) to obtain
\begin{eqnarray}
\frac{\log \mu( B(\pi(\tilde{\omega},\tilde{r}))}{\log \tilde{r}} & \geq & t- \eta(q+1, \tilde{r}).
\end{eqnarray}
Clearly $\tilde{r}<2r$ and so $\lim_{r \rightarrow \infty}\frac{\log \tilde{r}}{\log r}\geq 1$ and hence,
\begin{eqnarray*}
\frac{\log \mu\left((\phi_{\omega|\beta_q} \circ \phi_{\tau})([0,1]) \cap B(x,r)\right)}{\log r}& \geq & t- \Error.
\end{eqnarray*}
By symmetry the same holds if $ \phi_{\omega|\beta_q} \circ \phi_{\tau}([0,1])$ intersects the left hand boundary of $B(x,r)$. This proves the claim (\ref{LD3}).

Recall that,
\begin{eqnarray*}
B(x,r)\cap\pi(S) \subseteq  B_{\T}(x,r) \cup \left(\bigcup_{\tau \in \C}(\phi_{\omega|\beta_q} \circ \phi_{\tau})([0,1]) \cap B(x,r) \right).
\end{eqnarray*}
Noting that $\#\C\leq 2$ we obtain,
\begin{eqnarray*}
\mu\left(B(x,r)\right) & \leq & \mu\left(B_{\T}(x,r)\right)+ \sum_{\tau \in \C}\mu\left((\phi_{\omega|\beta_q} \circ \phi_{\tau})([0,1]) \cap B(x,r)\right)\\
& \leq & 3 \max\left\lbrace \mu\left(B_{\T}(x,r)\right) \right\rbrace \cup \left\lbrace \mu\left((\phi_{\omega|\beta_q} \circ \phi_{\tau})([0,1]) \cap B(x,r)\right): \tau \in \C \right\rbrace.
\end{eqnarray*}

By combining with (\ref{LD4}) and (\ref{LD3}), 
\begin{eqnarray*}
\frac{ \log \mu\left(B(x,r)\right)-\log 3}{\log r} & \geq & t-\Error,
\end{eqnarray*}
which implies (\ref{LD2}).

To complete the proof of the Lemma we fix $\omega \in S$, let $x= \pi(\omega)$ and consider a ball $B(\pi(\omega),r)$ of radius $r>0$. Now choose $q(r) \in \N$ so that 
\begin{eqnarray*}
B(x,r)\cap \pi(S) \subseteq \phi_{\omega|\gamma_{q(r)-1}}([0,1]) \text{ but }  B(x,r)\cap \pi(S) &\not\subseteq& \phi_{\omega|\gamma_{q(r)}}([0,1]).
\end{eqnarray*} 
Now either $B(x,r)\cap \pi(S) \not\subseteq \phi_{\omega|\beta_{q(r)}}([0,1])$, in which case we apply (\ref{LD1}) or $B(x,r)\cap \pi(S) \not\subseteq \phi_{\omega|\beta_{q(r)}}([0,1])$ in which case we apply (\ref{LD2}). In both cases we obtain,
\begin{eqnarray}\label{almost done}
\frac{\log \mu(B(x,r))}{\log r} \geq t - \eta(q(r),r).
\end{eqnarray}
By (\ref{r and Q}) whenver $q(r) \leq Q$ we have 
\begin{equation*}
r \geq \exp\left( -\max \left\lbrace S_{\gamma_{Q}}(\psi)(\tau'): \tau' \in \Gamma_{Q} \right\rbrace-c\right)>0.
\end{equation*}
Hence, $\lim_{r \rightarrow 0} q(r)= \infty$. Therefore, by (\ref{almost done}) we have
\begin{eqnarray}
\liminf_{r \rightarrow 0}\frac{\log \mu(B(\pi(\omega),r))}{\log r} \geq t.
\end{eqnarray}
\end{proof}
To complete the proof of Proposition \ref{Technical Lemma} we recall the following standard Lemma.

\begin{lemma}\label{dim lem} Let $\nu$ be a finite Borel measure on some metric space $X$. Suppose we have $J\subseteq X$ with $\nu(J)>0$ such that for all $x \in J$
 \[\liminf_{r \rightarrow 0}\frac{\log \nu( B(x,r))}{\log r} \geq d.\]Then $\dim J \geq d$.
\end{lemma}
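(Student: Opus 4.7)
The plan is to prove the standard mass distribution principle by reducing to a uniform version of the pointwise lower density estimate and then running the usual Frostman-style covering argument. Fix $d' < d$; it suffices to show that $\Hau^{d'}(J) > 0$, since taking $d' \uparrow d$ then yields $\dim J \geq d$.

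First, I would unpack the liminf hypothesis. For each $x \in J$ the condition $\liminf_{r\to 0}\log\nu(B(x,r))/\log r \geq d > d'$ implies that there exists $r(x)>0$ such that $\nu(B(x,r)) \leq r^{d'}$ for all $0 < r < r(x)$. Partition $J$ into level sets
\begin{equation*}
J_n := \left\{ x \in J : \nu(B(x,r)) \leq r^{d'} \text{ for all } 0 < r < 1/n \right\}.
\end{equation*}
Then $J = \bigcup_{n \in \N} J_n$ and, since $\nu(J) > 0$, countable subadditivity of $\nu$ gives some $n_0$ with $\nu(J_{n_0}) > 0$.

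Next, I would run the standard covering estimate on $J_{n_0}$. Let $\delta < 1/n_0$ and let $\{U_i\}_{i\in\N}$ be any countable cover of $J_{n_0}$ by sets with $\diam(U_i) < \delta$; without loss of generality each $U_i$ meets $J_{n_0}$, say at $x_i$, so that $U_i \subseteq B(x_i, \diam(U_i))$ with $\diam(U_i) < 1/n_0$. The defining property of $J_{n_0}$ then gives $\nu(U_i) \leq \nu(B(x_i, \diam(U_i))) \leq \diam(U_i)^{d'}$, and summing yields
\begin{equation*}
\sum_{i} \diam(U_i)^{d'} \geq \sum_i \nu(U_i) \geq \nu\Bigl(\bigcup_i U_i\Bigr) \geq \nu(J_{n_0}) > 0.
\end{equation*}
Taking the infimum over covers gives $\Hau^{d'}_{\delta}(J_{n_0}) \geq \nu(J_{n_0})$, and letting $\delta \to 0$ yields $\Hau^{d'}(J_{n_0}) \geq \nu(J_{n_0}) > 0$. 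Hence $\dim J \geq \dim J_{n_0} \geq d'$, and letting $d' \uparrow d$ finishes the proof.

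There is no substantive obstacle here; the only mild care needed is the Egorov-style reduction producing $J_{n_0}$, which is forced because the scale $r(x)$ at which the pointwise bound kicks in need not be uniform in $x$. Everything else is a direct application of the definitions of Hausdorff measure and dimension.
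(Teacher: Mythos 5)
Your proof is correct and is precisely the standard mass distribution (Frostman/Billingsley) argument that the paper simply cites from Falconer's \emph{Techniques in Fractal Geometry}, Proposition 2.2, without reproducing. The Egorov-style stratification into the sets $J_n$ to gain uniformity in the scale $r(x)$, followed by the elementary covering estimate giving $\Hau^{d'}(J_{n_0}) \geq \nu(J_{n_0}) > 0$, is exactly how the cited result is proved.
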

\begin{proof}
See \cite[Proposition 2.2]{F2} .
\end{proof}
Thus by Lemmas \ref{Pointwise Dimension Lemma} and \ref{mu has positive measure} we have
\begin{equation*}
\dim \pi(S) \geq t >s.
\end{equation*}
Hence, by Lemma \ref{subset} the Hausdorff dimension of $\D_y(\varphi)$ is at least $s$. Since this for all $s<s(\varphi)$, we have
\begin{equation*}
\dim \D_y(\varphi)\geq s(\varphi).
\end{equation*}
\end{proof}

\section{Proof of Theorem \ref{CE}}\label{CES}

\begin{proof}[Proof of Theorem \ref{CE}.]
We begin by defining a sequence $(r_n)_{n \in \N}$ by
\begin{equation}
r_n:=\min\left\lbrace \left(2+\sum_{q \in \N}e^{-q/n}\right)^{-n^2}\cdot e^{-2n^2}, \frac{1}{2}\left(\Phi(n)-\Phi(n+1)\right)\right\rbrace.
\end{equation}
Note that since $\Phi$ is strictly decreasing each $r_n>0$. Now take $n_0 > 2$ so that $\Phi(n_0)< \left(1- 2^{1-\beta^{-1}}\right)$ and $\sum_{n \geq n_0}e^{-\beta n}<1$. For each $n \geq n_0$ we choose some closed interval $V_n \subset (\Phi_{n+1},\Phi_n)$ of length $r_n$, which is always possible, since $r_n<\Phi(n)-\Phi(n+1)$. Note that since each $r_n<e^{-n}$ we have $\sum_{n\geq n_0}r_n^{\beta}\leq \sum_{n \geq n_0}e^{- \beta n}<1$. Hence, $r_1=r_2:=2^{-\beta^{-1}}\left(1-\sum_{n \geq n_0}r_n^{\beta}\right)^{\beta^{-1}}>0$. Note also that $1-\Phi(n_0)>2^{1-\beta^{-1}}>2r_1$. Thus, we may choose two disjoint closed intervals $V_1, V_2$ of width $r_1=r_2$ contained within $(\Phi(n_0),1)$.

We now let $\A:=\left\lbrace n \in \N: n \geq n_0 \right\rbrace \cup \left\lbrace 1, 2\right\rbrace$. Define $T:\bigcup_{n \in \A} V_n \rightarrow [0,1]$ to be the unique expanding Markov map which maps each of the intervals $\{V_n\}_{n\in \A}$ onto $[0,1]$ in an affine and orientation preserving way. First note that,
\begin{equation}
\sum_{n\in \A} \diam(V_n)^{\beta}=r_1^{\beta}+r_2^{\beta}+\sum_{n \geq n_0}r_n^{\beta}=1.
\end{equation}
Thus, $\dim \Lambda= \beta$ by Moran's formula.

Take $n \geq n_0$ and consider $\Sh^{(n)}_0(\Phi):=\left\lbrace x \in \Lambda: |T^n(x)|< \Phi(n) \right\rbrace$. Since $T$ is orientation preserving it follows from the construction of $T$ that we can cover $S_n(\Phi)$ with sets of the form $V_{\omega}=\cap_{j=0}^{n}T^{-j}V_{\omega_j}$ where $\omega \in \mathcal{C}_n:=\left\lbrace \omega \in \A^{n+1}: \omega_{n+1}\geq n \right\rbrace$. Since $T$ is piecewise linear we have $\diam V_{\omega}= \prod_{j=1}^{n+1} r_{\omega_j}$ for each $\omega \in \A^{n+1}$. It follows that for any $m>n_0$ we may cover $\Sh_0(\Phi)$ with the family $\bigcup_{n \geq m}\left\lbrace V_{\omega}: \omega \in \mathcal{C}_n\right\rbrace$.

Now take $\epsilon>0$. For all $n>\epsilon^{-1}$ we have,
\begin{eqnarray*}
\sum_{\omega \in \mathcal{C}_n} \left( \diam V_{\omega}\right)^{\epsilon} & \leq & \sum_{\omega \in \mathcal{C}_n} (r_{\omega_1} \cdots r_{\omega_n})^{\epsilon}\\
& = & \left(\sum_{n \in \A} r_n^{\epsilon}\right)^n \cdot \sum_{q \geq n}r_n^{\epsilon}\\
& \leq & \left(2+\sum_{q \in \N} e^{-\epsilon q}\right)^n \cdot \sum_{k \geq n} \left(\left(2+\sum_{q \in \N}e^{-q/k}\right)^{-k^2}\cdot e^{-2k^2}\right)^{\epsilon}\\
& \leq & \left(2+\sum_{q \in \N} e^{-\epsilon q}\right)^n \cdot \left(2+\sum_{q \in \N}e^{-q/n}\right)^{-n^2 \epsilon } \cdot  \sum_{k \geq n} e^{-2kn \epsilon}\\
& \leq & \left(2+\sum_{q \in \N} e^{-\epsilon q}\right)^n \cdot \left(2+\sum_{q \in \N}e^{-q/n}\right)^{-n } \cdot e^{-n} \sum_{k \geq n} e^{-k}\\
& \leq & e^{-n} \sum_{k \in \N} e^{-k}.
\end{eqnarray*}
Thus, for all $m>\epsilon^{-1}$ we have,
\begin{eqnarray*}
\sum_{n \geq m} \sum_{\omega \in \mathcal{C}_n} \left(\diam V_{\omega}\right)^{\epsilon}  \leq \sum_{n \geq m} e^{-n} \sum_{k \in \N} e^{-k} \leq \left(\sum_{k \in \N} e^{-k}\right)^2< \infty.
\end{eqnarray*}
Since $\lim_{m \rightarrow \infty} \sup  \left\lbrace \diam V_{\omega}: \omega \in \mathcal{C}_n \right\rbrace =0$ it follows that $\dim S_0(\Phi)< \epsilon$. As this holds for all $\epsilon>0$ we have $\dim S_0(\Phi)=0$.
\end{proof}

We note that by Corollary \ref{positive corollary} $s(\alpha)>0$ for all $\alpha \in \R_{> 0}$.

\section{Remarks}\label{remarks}

Both Theorems \ref{ifs} and \ref{dense unit interval} may be extended in a number of ways with some minor alterations of the proof.

Given $\Phi:\N \times \Lambda \rightarrow (0,1)$ we define 
\begin{equation*}
\Ss_y(\Phi):=\bigcap_{m \in \N} \bigcup_{n \geq m}\left\lbrace x \in \Lambda: |T^n(x)-y|<\Phi(n,x)\right\rbrace.
\end{equation*}
Theorems \ref{ifs} and \ref{dense unit interval} both deal with the case where $\Phi$ is multiplicative, ie. $\Phi(n+m,x)=\Phi(n,T^m(x))\cdot \Phi(m,x)$, for all $n,m \in \N\cup\{0\}$ and $x \in \Lambda$. Indeed, when $\Phi$ is multiplicative, we may take $\varphi: x\mapsto -\log \Phi(0,x)$ so that $\Phi(n,x)=\exp(-S_n(\varphi)(x))$ and $\Ss_y(\Phi)=\D_y(\varphi)$. 

We say that $\Phi$ is almost multiplicative if there exists some constant $C>1$ such that,
\begin{equation*}
C^{-1}<\frac{\Phi(n,T^m(x))\cdot \Phi(m,x)}{\Phi(n+m,x)}<C,
\end{equation*}
for all $n,m \in \N$ and $x \in \Lambda$. Examples include the norms of certain matrix products (see \cite{FL,IY}). Given $\omega \in \A^n$ we let $\Phi(\omega):= \sup\left\lbrace \Phi(n,x):x \in V_{\omega}\right\rbrace$. Following Feng and Lau \cite{FL} one may define a pressure function, $P(s,\Phi)\rightarrow \R$ by
\begin{equation*}
P(s,\Phi):=\lim_{n \rightarrow \infty}\frac{1}{n} \sum_{\omega \in \A^n}\left(\Phi(\omega)\cdot||\psi_{\omega}'||_{\infty}\right)^s,
\end{equation*}
and let $s(\Phi):= \inf \left\lbrace s: P(s,\Phi)\leq 0\right\rbrace$. Technical modifications to the proof of Theorems \ref{ifs} and \ref{dense unit interval} show that whenever $T$ is a countable Markov map and $\Phi$ is almost multiplicative, $\dim \Ss_y(\Phi)=s(\Phi)$ for all $y \in \Lambda$, and if $\overline{\Lambda}=[0,1]$ then $\dim \Ss_y(\Phi)=s(\Phi)$ for all $y \in \overline{\Lambda}$.

Instead of considering the sets $\D_y(\varphi)$ we can consider sets of the form,
\begin{eqnarray*}
\Lo_y(\varphi):= \left\lbrace x \in \Lambda : \limsup_{n \rightarrow \infty} \frac{\log d(T^n(x),y)}{S_n(\varphi)(x)}=-1 \right\rbrace.
\end{eqnarray*}
When $T$ is a countable Markov map we have $\dim \Lo_y(\varphi)=\dim \D_y(\varphi)=s(\varphi)$ for all $y \in \Lambda$ and when $T$ is a countable Markov map satisfying $\overline{\Lambda}=[0,1]$ we have $\dim \Lo_y(\varphi)=\dim \D_y(\varphi)=s(\varphi)$ for all $y \in [0,1]$. To prove the upper bound we note that $\Lo_y(\varphi)\subset \dim \D_y((1-\delta)\varphi)$ for all $\delta\in (0,1)$ and $\lim_{\delta \rightarrow 0}\dim \D_y((1-\delta)\varphi) =\lim_{\delta \rightarrow 0}s((1-\delta)\varphi)=s(\varphi)$. To prove the lower bound requires a technical adaptation of the proof of Proposition \ref{Technical Lemma}, removing those points $x$ for which $T^n(x)$ moves too close to $y$.

One can also consider what happens when we replace assumption (1) in Definition \ref{EMR def} with the weaker assumption that $T$ is modelled by a subshift of finite type. If the corresponding matrix is finitely primitive (see \cite[Section 2.1]{GDMS}) then one may adapt the proofs of Theorems \ref{ifs} and \ref{dense unit interval} with only mino modifications. However, to determine the dimension of $\D_y(\varphi)$ for an arbitrary countable subshift of finite type would require further innovation.

\end{document}